\newtheorem{lem}{Lemma}
\newtheorem{prop}{Proposition}
\newtheorem{property}{Property}
\newtheorem{cor}{Corollary}
\newtheorem{thm}{Theorem}
\title{Recovery of Paley-Wiener functions using scattered translates of regular interpolators}
\author{Jeff Ledford}
\date{May 2012}
\begin{document}

\maketitle

\section{Introduction}

By exploiting properties of the Fourier transform, Lyubarskii and Madych (see \cite{paper 3}) were able to recover Paley-Wiener functions from their samples on a Complete Interpolating Sequence $\{x_j\}$ by using tempered splines with knot sequence $\{x_j\}$ whose degree increases to infinity.  Later, Schlumprecht and Sivakumar (see \cite{siva}), were able to prove a similar result by using parametrized scattered translates of the Gaussian rather than tempered splines.  A natural question to ask is that of whether one may use other families of interpolants to produce similar recovery results.  We provide a general theorem on when this is possible, in particular, we show that the specific example of the Poisson kernel $1/(1+x^2)$, parametrized appropriately, may also be used to recover Paley-Wiener functions from their samples on a Complete Interpolating Sequence.

\section{Definitions and Basic Facts}

We adopt the following convention for the Fourier transform of $g\in L^1(\mathbb{R})$,
\begin{equation}\label{FTdef} \hat{g}(\xi):=(2\pi)^{-1/2}\int_{\mathbb{R}}g(x)e^{-ix\xi}dx.
\end{equation} 
When necessary, this definition is extended to distributions in the usual way.  We denote by $PW_\pi$ the following set of functions
\[
PW_\pi=\left\{f\in L^2(\mathbb{R}): \text{supp}(\hat{f})\subset [-\pi,\pi] \right\}.
\]
A member of this set is called a \emph{Paley-Wiener function}.  We call a sequence $\mathcal{X}=\{x_j\}_{j\in\mathbb{Z}}\subset\mathbb{R}$ a \emph{complete interpolating sequence} (CIS) if the corresponding sequence of exponentials $\mathcal{E}=\{e^{-ix_j\xi}\}$ is a Riesz Basis for $L^2([-\pi,\pi])$.  Since our calculations will require it, we review the definition of a Riesz basis, tailored to our situation.  A set of functions $\mathcal{E}=\{e_j\}$ is said to be a Riesz basis for $L^2([-\pi,\pi])$ if the linear span of $\mathcal{E}$ is dense in $L^2([-\pi,\pi]$ and there is a $C>0$ such that 
\begin{equation}\label{RBineq}
\displaystyle C^{-1}\left\| \{a_j\} \right\|_{l^2(\mathbb{Z})}\leq \left\| \sum_{j\in\mathbb{Z}}a_je_j \right\|_{L^2([-\pi,\pi])} \leq C \| \{a_j\} \|_{l^2(\mathbb{Z})}
\end{equation}
for all $\{a_j\}_{j\in\mathbb{Z}}\in l^2(\mathbb{Z})$.\\
We have, for any $g\in L^2([-\pi,\pi])$, the representation 
\[
g(\xi)=\sum_{j\in\mathbb{Z}}a_je^{-ix_j\xi}
\]
for the appropriate $\{a_j\}_{j\in\mathbb{Z}}\in l^2(\mathbb{Z})$, and define the \emph{prolongation operator} $A:L^2([-\pi,\pi])\to L^2([-\pi,\pi])$ by
\begin{equation}\label{ProOp}
Ag(\xi)=A\left(\sum_{j\in\mathbb{Z}}a_je^{-ix_j\xi}\right)=\sum_{j\in\mathbb{Z}}a_je^{-2\pi i x_j}e^{-ix_j\xi},\hspace{.25in}|\xi|\leq\pi.
\end{equation}
Similarly, we can define $A^k$, for any integer $k$, by
\[
A^kg(\xi)=A^k\left(\sum_{j\in\mathbb{Z}}a_je^{-ix_j\xi}\right)=\sum_{j\in\mathbb{Z}}a_je^{-2\pi k i x_j}e^{-ix_j\xi},\hspace{.25in}|\xi|\leq\pi.
\]
In light of \eqref{RBineq}, we have 
\begin{equation}\label{OPbound}
\| A^kg \|_{L^2([-\pi,\pi])}\leq C^2 \| g\|_{L^2([-\pi,\pi])}.
\end{equation}
A similar bound holds for the adjoint $A^{* k}$.  Allowing $\xi\in\mathbb{R}$ in the right hand side of \eqref{ProOp}, we also see that $Ag(\xi)\in L^2_{loc}(\mathbb{R})$.
We are in position to prove the following lemma.
\begin{lem}\label{sq sum}
Suppose that $f\in PW_\pi$ and $\{x_j\}$ is a CIS, then $\{f(x_j)\}\in l^2(\mathbb{Z})$.
\end{lem}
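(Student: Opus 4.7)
The plan is to realise the sample values $f(x_j)$ as inner products against the Riesz basis elements $e^{-ix_j\xi}$ on $L^2([-\pi,\pi])$, and then to extract an $l^2$ bound by dualising the upper Riesz bound in \eqref{RBineq}.

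First, since $\hat{f}\in L^2([-\pi,\pi])$ is compactly supported it is also in $L^1([-\pi,\pi])$, so $f$ admits a continuous representative given pointwise by Fourier inversion. Thus for every $j\in\mathbb{Z}$,
\[
f(x_j)=(2\pi)^{-1/2}\int_{-\pi}^{\pi}\hat{f}(\xi)e^{ix_j\xi}\,d\xi = (2\pi)^{-1/2}\langle \hat{f},\,e^{-ix_j\xi}\rangle_{L^2([-\pi,\pi])}.
\]
It therefore suffices to show that the sequence $\{\langle \hat{f},\,e^{-ix_j\xi}\rangle\}_{j\in\mathbb{Z}}$ lies in $l^2(\mathbb{Z})$.

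The key step is to reinterpret the upper Riesz bound in \eqref{RBineq} as a statement about the analysis map. Consider the synthesis operator $T:l^2(\mathbb{Z})\to L^2([-\pi,\pi])$ defined by $T\{a_j\}=\sum_j a_j e^{-ix_j\xi}$; by \eqref{RBineq} we have $\|T\|\leq C$. Its Hilbert-space adjoint is precisely the analysis operator $T^{*}g=\{\langle g,\,e^{-ix_j\xi}\rangle\}_{j\in\mathbb{Z}}$, and therefore also has operator norm at most $C$. Applying this to $g=\hat{f}$ and invoking Plancherel gives
\[
\sum_{j\in\mathbb{Z}}|\langle \hat{f},\,e^{-ix_j\xi}\rangle|^2 \leq C^2\|\hat{f}\|_{L^2([-\pi,\pi])}^2 = C^2\|f\|_{L^2(\mathbb{R})}^2 < \infty,
\]
and combining this with the previous identity yields $\sum_j|f(x_j)|^2\leq (C^2/2\pi)\|f\|_{L^2(\mathbb{R})}^2$, which is the assertion of the lemma.

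There is no real technical obstacle; the only conceptual move is noticing that the upper Riesz bound on the synthesis side automatically produces a Bessel-type inequality on the analysis side via the adjoint, after which the result drops out of Fourier inversion and Plancherel.
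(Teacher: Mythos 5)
Your proof is correct. You and the paper both begin from the same reduction: Fourier inversion writes $f(x_j)$ as (a constant multiple of) the inner product $\langle \hat f, e^{-ix_j\xi}\rangle$ on $L^2([-\pi,\pi])$, so the lemma becomes a Bessel-type inequality for the exponential system $\{e_j\}$. Where you diverge is in how that inequality is produced. The paper expands $\hat f=\sum_j\langle \hat f,e_j\rangle\,\tilde e_j$ in the \emph{dual} Riesz basis $\{\tilde e_j\}$ (citing Young for its existence) and then reads off the coefficient bound from the Riesz inequality \eqref{RBineq} applied to the dual system, with its own constant $\tilde C$. You instead note that the upper bound in \eqref{RBineq} says the synthesis operator $T\{a_j\}=\sum_j a_je_j$ has norm at most $C$, hence so does its adjoint, the analysis operator $g\mapsto\{\langle g,e_j\rangle\}$, which is precisely the desired inequality. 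Your route is more self-contained and slightly more general: it uses only the upper Riesz bound (i.e., the Bessel property of the sequence), requires no dual basis, and yields the constant $C$ of \eqref{RBineq} directly rather than the dual constant $\tilde C$. You are also a bit more careful with the $(2\pi)^{-1/2}$ normalization, which the paper silently absorbs into its constant. Both arguments are short and standard; nothing in the rest of the paper depends on which one is used.
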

\begin{proof}
Let $\{e_j\}=\{e^{-ix_j\xi}\}$ be the Riesz basis determined by the CIS.  Then the associated ``dual" Riesz basis $\{\tilde{e}_j\}$, (see \cite{young}), satisfies the following condition:
\[
f(\xi) = \sum_{j\in\mathbb{Z}}\langle f,\tilde{e}_j\rangle e^{-ix_j\xi}, \hspace{.25in}\text{ for }\hspace{.25in}|\xi|\leq\pi.
\]
The inner product is the standard inner product on $L^2([-\pi,\pi])$ and the equality is understood in the $L^2([-\pi,\pi])$ sense.  We have that $ f(x_j)=\langle \hat{f},e_j \rangle $ by the inversion formula, thus  $\hat{f}(\xi)=\sum_{j\in\mathbb{Z}} \langle \hat{f},e_j \rangle \tilde{e_j}$.
We use \eqref{RBineq} to get the following bound:
\[
\sum_{j\in\mathbb{Z}}|f(x_j)|^2 \leq {\tilde{C}^2} \| \hat{f}  \|^2_{L^2([-\pi,\pi])}={\tilde{C}^2} \| f \|^2_{L^2(\mathbb{R})}.
\]
\end{proof}

\section{Interpolation Results}
Throughout this section and the rest of the paper we consider a fixed but otherwise arbitrary $f\in PW_\pi$ and CIS $\{x_j\}$.  We exhibit sufficient conditions on a real valued function $\phi(x)$, which we call an \emph{interpolator}, such that the following property holds:
\begin{property}\label{interp}
There is a unique sequence $\{a_j\}_{j\in\mathbb{Z}}\in l^2(\mathbb{Z})$ for which the interpolant
\[
If(x)=\sum_{j\in\mathbb{Z}}a_j\phi(x-x_j)
\]
is continuous and satisfies $If(x_k)=f(x_k)$ for all $k\in\mathbb{Z}$.
\end{property}
\noindent Let $\phi(x)$ satisfy the following assumptions:
\begin{enumerate}
\item[(A1)] $\phi(x),\hat\phi(\xi)\in L^1(\mathbb{R})$.
\item[(A2)] $\hat{\phi}(\xi)\geq 0$ and $\hat{\phi}(\xi)\geq\delta>0$ on $[-\pi,\pi]$.
\item[(A3)] Let $\displaystyle M_j=\sup_{|\xi|\leq\pi}\hat{\phi}(\xi+2\pi j)$, then $M_j\in l^1(\mathbb{Z})$.
\end{enumerate}
Under these assumptions, we will show that property \ref{interp} holds.  We begin with the following lemma.  In the calculations that follow we will combine all constants into a single one, denoted $C$, whose exact value depends on its occurrence but is otherwise irrelevant.

\begin{lem}\label{interpolation proposition}
If $\phi(x)$ satisfies (A1)-(A3), and $\mathbf{A}:l^2(\mathbb{Z})\to l^2(\mathbb{Z})$ is defined by the infinite matrix $\mathbf{A}:=\left( \phi(x_k- x_j)  \right)_{k,j\in\mathbb{Z}}$, then $\mathbf{A}$ is invertible.
\end{lem}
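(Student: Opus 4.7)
The plan is to show that $\mathbf{A}$ defines a bounded, self-adjoint, and strictly positive operator on $\ell^2(\mathbb{Z})$, which together force invertibility. First, since $\hat\phi$ is real and nonnegative by (A2) and $\phi$ is real by hypothesis, $\phi$ is even, so $(\mathbf{A})_{k,j}=\phi(x_k-x_j)=\phi(x_j-x_k)=(\mathbf{A})_{j,k}$, giving (formal) self-adjointness of the matrix.

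The heart of the argument is the quadratic form identity
\[
\langle \mathbf{A} a, a\rangle \;=\; (2\pi)^{-1/2}\int_{\mathbb{R}}\hat\phi(\xi)\,|F(\xi)|^2\,d\xi, \qquad F(\xi):=\sum_{j}a_j e^{-ix_j\xi},
\]
which I would establish first for finitely supported sequences $\{a_j\}$. For such $a$, $F$ is a trigonometric polynomial and hence bounded on $\mathbb{R}$, so inserting the Fourier inversion formula for $\phi(x_k-x_j)$ and swapping sum with integral is justified by (A1).

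Both bounds on this form come from splitting the integral into the $2\pi$-periods $[(2n-1)\pi,(2n+1)\pi]$ and using the crucial identity $F(\xi+2\pi n)=A^n F(\xi)$, which is precisely the content of the definition of the prolongation operator. For the lower bound I would retain only the $n=0$ piece and apply $\hat\phi\geq\delta$ from (A2) together with the Riesz basis lower bound in \eqref{RBineq}, obtaining $\langle\mathbf{A} a,a\rangle\geq c_1\|a\|_{\ell^2}^2$ for some $c_1>0$. For the upper bound I would estimate the $n$-th piece by $M_n\|A^n F\|_{L^2[-\pi,\pi]}^2$, replace $\|A^n F\|$ by $\|F\|$ using \eqref{OPbound}, sum in $n$ using (A3), and finally invoke the Riesz basis upper bound, obtaining $\langle\mathbf{A} a,a\rangle\leq c_2\|a\|_{\ell^2}^2$. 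Boundedness of $\mathbf{A}$ itself then follows from Cauchy--Schwarz applied to the positive Hermitian sesquilinear form $(a,b)\mapsto \langle \mathbf{A}a,b\rangle$, and by density all three estimates extend from finitely supported sequences to all of $\ell^2(\mathbb{Z})$.

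Once $\mathbf{A}$ is bounded, self-adjoint and satisfies $\langle\mathbf{A} a,a\rangle\geq c_1\|a\|^2$ on $\ell^2$, invertibility is immediate: $\mathbf{A}$ is injective; by self-adjointness its range is $(\mathrm{Ker}\,\mathbf{A})^{\perp}=\ell^2(\mathbb{Z})$; and the range is closed because $\mathbf{A}$ is bounded below. The main technical obstacle is translating the identity $F(\xi+2\pi n)=A^n F(\xi)$ into a rigorous $L^2$ statement on each period in a way that lets the summability provided by (A3) pair correctly with the operator-norm bound \eqref{OPbound} uniform in $n$; the rest is routine manipulation.
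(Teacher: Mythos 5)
Your proposal is correct and follows essentially the same route as the paper: the quadratic-form identity $\langle\mathbf{A}a,a\rangle=(2\pi)^{-1/2}\int\hat\phi|F|^2$, the lower bound from (A2) plus the Riesz basis inequality, and the upper bound from splitting into $2\pi$-periods, the prolongation operator bound \eqref{OPbound}, and (A3). The extra care you take (finitely supported sequences first, the explicit self-adjointness and bounded-below-implies-invertible argument) only makes explicit what the paper leaves implicit.
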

\begin{proof}
We will show that $\mathbf{A}$ is positive definite and bounded, hence invertible.  In particular, we show the following two inequalities.
\begin{equation}\label{pos def 1}
m\sum_{j\in\mathbb{Z}}|a_j|^2\leq \sum_{j,k\in\mathbb{Z}}a_j\bar{a}_k\phi(x_k-x_j)
\end{equation}
\begin{equation}\label{pos def 2}
 \sum_{j,k\in\mathbb{Z}}a_j\bar{a}_k\phi(x_k-x_j) \leq M \sum_{j\in\mathbb{Z}}|a_j|^2
\end{equation}
Both $m$ and $M$ are positive constants independent of $\{a_j\}_{j\in\mathbb{Z}}$.
We first prove \eqref{pos def 1}. 
\begin{align*}
& \sum_{k\in\mathbb{Z}}\sum_{j\in\mathbb{Z}}\bar{a}_k a_j \phi(x_k-x_j) = \dfrac{1}{\sqrt{2\pi}}\sum_{k\in\mathbb{Z}}\sum_{j\in\mathbb{Z}}\bar{a}_k a_j\int_{\mathbb{R}}\hat{\phi}(\xi)e^{i(x_k-x_j)\xi}d\xi \\
&=\dfrac{1}{\sqrt{2\pi}}\int_{\mathbb{R}}\left|\sum_{j\in\mathbb{Z}}a_j e^{-ix_j\xi}   \right|^2 \hat{\phi}(\xi)d\xi \geq \dfrac{1}{\sqrt{2\pi}}\int_{-\pi}^{\pi} \left|\sum_{j\in\mathbb{Z}}a_j e^{-ix_j\xi}   \right|^2 \hat{\phi}(\xi)d\xi \\
&\geq \dfrac{\delta}{\sqrt{2\pi} C^2}\sum_{j\in\mathbb{Z}}|a_j|^2 \geq m \sum_{j\in\mathbb{Z}}|a_j|^2
\end{align*}
Here we have used (A1), (A2), and \eqref{RBineq}.  We also used the Fubini-Tonelli theorem to interchange the double sum and the integral.  This will be justified as long as we can show \eqref{pos def 2}, which we do presently.
\begin{align*}
 &\sum_{j,k\in\mathbb{Z}}a_j\bar{a}_k \phi(x_k-x_j) = \dfrac{1}{\sqrt{2\pi}}\sum_{j,k\in\mathbb{Z}}a_j\bar{a}_k \int_{\mathbb{R}}\hat{\phi}(\xi)e^{i(x_k-x_j)\xi}d\xi \\
&=\dfrac{1}{\sqrt{2\pi}}\int_\mathbb{R}\left|\sum_{j\in\mathbb{Z}}a_j e^{-ix_j\xi}\right|^2\hat{\phi}(\xi)d\xi \\
&\leq \dfrac{1}{\sqrt{2\pi}}\sum_{k\in\mathbb{Z}}\sup_{|\xi|\leq\pi}\hat{\phi}(\xi+2\pi k)\int_{-\pi}^{\pi}\left|A^k\sum_{j\in\mathbb{Z}}a_j e^{-ix_j\xi}\right|^2d\xi \\
&\leq  \dfrac{C^4}{\sqrt{2\pi}}  \sum_{k\in\mathbb{Z}}M_k \sum_{j\in\mathbb{Z}}|a_j|^2  \leq    M \sum_{j\in\mathbb{Z}}|a_j|^2 
\end{align*}
We have used (A3), \eqref{OPbound}, and the Fubini-Tonelli theorem to interchange the integral and the sum.  

\end{proof}
By virtue of Lemma \ref{sq sum}, we have the following corollary.

\begin{cor}\label{cor1}
There is a unique sequence $\{a_j\}_{j\in\mathbb{Z}}\in l^2(\mathbb{Z})$ such that $If(x_k)=f(x_k)$ for all $k\in\mathbb{Z}$.
\end{cor}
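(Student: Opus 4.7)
The plan is to recast the interpolation equations $If(x_k) = f(x_k)$, $k \in \mathbb{Z}$, as the infinite linear system $\mathbf{A}\mathbf{a} = \mathbf{f}$, where $\mathbf{a} = \{a_j\}$, $\mathbf{f} = \{f(x_k)\}$, and $\mathbf{A}$ is the operator from Lemma \ref{interpolation proposition}. Once this reformulation is in place, the corollary is essentially a direct combination of the two preceding results.

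First I would apply Lemma \ref{sq sum} to the fixed $f \in PW_\pi$ to conclude $\mathbf{f} = \{f(x_k)\}_{k \in \mathbb{Z}} \in l^2(\mathbb{Z})$. Then Lemma \ref{interpolation proposition} gives that $\mathbf{A}$ is a bounded, invertible operator on $l^2(\mathbb{Z})$, so the sequence
\[
\mathbf{a} := \mathbf{A}^{-1}\mathbf{f}
\]
is a well-defined element of $l^2(\mathbb{Z})$, and it is the unique $l^2$ solution to $\mathbf{A}\mathbf{a} = \mathbf{f}$.

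The step that requires a small verification is the identification of the $k$-th coordinate of $\mathbf{A}\mathbf{a}$ with the pointwise sum $\sum_{j} a_j \phi(x_k - x_j) = If(x_k)$. For this I would observe that boundedness of $\mathbf{A}$ on $l^2$ forces each row $(\phi(x_k - x_j))_{j \in \mathbb{Z}}$ to lie in $l^2(\mathbb{Z})$ (it is $\overline{\mathbf{A}^{\,*} e_k}$). Combining this with $\mathbf{a} \in l^2$ and Cauchy–Schwarz shows the series converges absolutely to $(\mathbf{A}\mathbf{a})_k = f(x_k)$, giving the interpolation conditions pointwise.

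Uniqueness then follows immediately: any other $l^2$ sequence $\{a'_j\}$ satisfying the interpolation conditions must obey $\mathbf{A}(\mathbf{a} - \mathbf{a}') = 0$, whence $\mathbf{a} = \mathbf{a}'$ by the injectivity part of Lemma \ref{interpolation proposition}. I do not expect a real obstacle here; the only non-routine point is the $l^2$-membership of rows of $\mathbf{A}$, which is a soft consequence of the boundedness already established in the previous lemma.
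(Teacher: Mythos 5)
Your proposal is correct and is exactly the argument the paper intends: the corollary is stated without proof as an immediate consequence of Lemma \ref{sq sum} (giving $\{f(x_k)\}\in l^2(\mathbb{Z})$) and Lemma \ref{interpolation proposition} (giving invertibility of $\mathbf{A}$ on $l^2(\mathbb{Z})$), so that $\mathbf{a}=\mathbf{A}^{-1}\mathbf{f}$ is the unique solution. Your additional remark justifying the pointwise identification of $(\mathbf{A}\mathbf{a})_k$ with the absolutely convergent series $\sum_j a_j\phi(x_k-x_j)$ is a detail the paper leaves implicit, and it is correctly handled.
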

\begin{prop}\label{L2prop}
If $\phi(x)$ satisfies (A1)-(A3), then Property \ref{interp} is satisfied and $If(x)\in L^2(\mathbb{R})$.
\end{prop}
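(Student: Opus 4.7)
\medskip

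\noindent\textbf{Proof proposal.}
Corollary \ref{cor1} already supplies the unique sequence $\{a_j\}\in l^2(\mathbb Z)$ together with the interpolation identity $If(x_k)=f(x_k)$, so what remains to establish for Property \ref{interp} is that $If$ is a bona fide continuous function, and separately that $If\in L^2(\mathbb R)$. My plan is to pass to the Fourier side and identify $\widehat{If}$ as the product of $\hat\phi$ with the natural extension to $\mathbb R$ of the exponential series $g(\xi)=\sum_j a_j e^{-ix_j\xi}$; assumptions (A1)--(A3) will then deliver that this product lies in $L^1(\mathbb R)\cap L^2(\mathbb R)$, which yields both desired conclusions.

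First I would let $g(\xi)=\sum_j a_j e^{-ix_j\xi}$, which is an element of $L^2([-\pi,\pi])$ by the Riesz basis inequality \eqref{RBineq}, and then extend $g$ to all of $\mathbb R$ by the same formal series. The key observation is that for $|\xi|\le\pi$ and $k\in\mathbb Z$,
\[
g(\xi+2\pi k)=\sum_{j}a_j e^{-i x_j(\xi+2\pi k)}=\sum_{j}a_j e^{-2\pi i k x_j}e^{-ix_j\xi}=A^k g(\xi),
\]
so the shifted copies of $g$ are precisely controlled by the prolongation operator $A^k$ and hence, by \eqref{OPbound}, by $C^2\|g\|_{L^2([-\pi,\pi])}$. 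Splitting $\mathbb R$ into the intervals $[-\pi+2\pi k,\pi+2\pi k]$ and bounding $\hat\phi(\xi+2\pi k)\le M_k$ on each, I obtain
\[
\|g\hat\phi\|_{L^2(\mathbb R)}^{2}\le C^{4}\|g\|_{L^2([-\pi,\pi])}^{2}\sum_{k\in\mathbb Z}M_k^{2},
\]
and a Cauchy--Schwarz step on each slab gives
\[
\|g\hat\phi\|_{L^1(\mathbb R)}\le \sqrt{2\pi}\,C^{2}\|g\|_{L^2([-\pi,\pi])}\sum_{k\in\mathbb Z}M_k.
\]
Both sums are finite: $\sum_k M_k<\infty$ is (A3) outright, and $\sum_k M_k^2\le(\sup_k M_k)\sum_k M_k<\infty$, using that $\hat\phi$ is bounded (a consequence of $\phi\in L^1$ from (A1)).

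To identify the $L^1\cap L^2$ function $F:=\mathcal F^{-1}(g\hat\phi)$ with $If$, I would introduce the truncations $f_N(x)=\sum_{|j|\le N}a_j\phi(x-x_j)$ and $g_N(\xi)=\sum_{|j|\le N}a_je^{-ix_j\xi}$. Since each $f_N\in L^1(\mathbb R)$, one has $\widehat{f_N}=g_N\hat\phi$ pointwise. Applying the $L^2$ estimate above to $g-g_N$ in place of $g$ and using $\|g-g_N\|_{L^2([-\pi,\pi])}\to 0$ (Riesz basis), it follows that $g_N\hat\phi\to g\hat\phi$ in $L^2(\mathbb R)$, hence $f_N\to F$ in $L^2(\mathbb R)$. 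Thus the defining series for $If$ converges in $L^2$ to $F$, so $If\in L^2(\mathbb R)$; and since $g\hat\phi\in L^1$, Fourier inversion shows $F$ is continuous (in fact bounded), so $If$ is continuous. The interpolation condition from Corollary \ref{cor1} then completes Property \ref{interp}.

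The main obstacle, and the reason to route through truncations rather than manipulate the formal series $\sum_j a_j\phi(\cdot-x_j)$ directly, is that nothing a priori guarantees absolute or uniform convergence of that series: one only has an $l^2$-sequence $\{a_j\}$ and a function $\phi$ which need not be in $L^2$. The prolongation operator $A^k$, together with the simultaneous summability of $M_k$ and $M_k^2$ coming from (A3) and the boundedness of $\hat\phi$, is exactly what lets the periodic $[-\pi,\pi]$ control propagate to global $L^1$ and $L^2$ control of $g\hat\phi$, and thereby legitimizes the Fourier-side identification.
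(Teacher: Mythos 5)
Your proposal is correct and follows essentially the same route as the paper: both arguments split $\mathbb{R}$ into the slabs $[-\pi+2\pi k,\pi+2\pi k]$, rewrite the shifted exponential series as $A^k g$, and use (A3) with \eqref{OPbound} to bound $\|\hat\phi\, g\|_{L^1(\mathbb{R})}$ by $\sum_k M_k$ and $\|\hat\phi\, g\|_{L^2(\mathbb{R})}^2$ by $\sum_k M_k^2$, concluding continuity from $\widehat{If}\in L^1$. The only difference is your explicit truncation argument identifying $\mathcal{F}^{-1}(g\hat\phi)$ with the series defining $If$, a justification the paper takes for granted.
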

\begin{proof}
We need only show that $If(x)$ is continuous, as the rest follows from Corollary \ref{cor1}.  It is enough to show that $\widehat{If}(\xi)\in L^1(\mathbb{R})$.

\begin{align*}
&\int_{\mathbb{R}}\left|\widehat{If}(\xi)\right|d\xi =\int_{\mathbb{R}}\left|\hat{\phi}(\xi)\sum_{j\in\mathbb{Z}}a_je^{-ix_j\xi}\right|d\xi \\
=&\sum_{k\in\mathbb{Z}}\int_{-\pi}^{\pi}\left|\hat{\phi}(\xi+2\pi k)\right|\left|A^{k}\left(\sum_{j\in\mathbb{Z}}a_je^{-ix_j\xi}\right)\right|d\xi   \\
\leq &\sum_{k\in\mathbb{Z}}\sup_{|\xi|\leq\pi}\left|\hat{\phi}(\xi+2\pi k)\right|\int_{-\pi}^{\pi}\left|A^k\left(\sum_{j\in\mathbb{Z}}a_je^{-ix_j\xi}\right)\right|d\xi \\
\leq& C\left\| \{a_j\} \right\|_{l^2(\mathbb{Z})}\sum_{k\in\mathbb{Z}}M_k
\end{align*}

Here we have used (A3) and \eqref{OPbound}, as well as the Cauchy-Schwarz inequality.  To show that $If(x)\in L^2(\mathbb{R})$, we use Plancherel's theorem.
\begin{align*}
&\left\| \sum_{j\in\mathbb{Z}}a_j\phi(x-x_j)  \right\|^2_{L^2(\mathbb{R})}=\int_{\mathbb{R}}\left|\hat\phi(\xi) \sum_{j\in\mathbb{Z}}a_je^{-ix_j\xi} \right|^2d\xi  \\
&\leq\sum_{k\in\mathbb{Z}}\sup_{|\xi|\leq\pi}\left|\hat{\phi}(\xi-2\pi k)\right|^2\int_{-\pi}^{\pi}\left|A^k\left(\sum_{j\in\mathbb{Z}}a_je^{-ix_j\xi}\right)\right|^2d\xi \leq C \sum_{k\in\mathbb{Z}} M^2_k\sum_{j\in\mathbb{Z}}|a_j|^2
\end{align*}
We've used (A3), \eqref{RBineq}, and \eqref{OPbound} to arrive at the desired estimate.  Note that (A3) implies that $\{M_j\}_{j\in\mathbb{Z}}\in l^p(\mathbb{Z})$ for all $p\geq 1$.
\end{proof}

\section{Recovery Results}

We consider the one parameter family of interpolators $\{\phi_\alpha(x)\}$, where $\alpha\in A\subset(0,\infty)$.  We will call the family \emph{regular} if it satisfies the following hypotheses.
\begin{itemize}
\item[(H1)] $\phi_{\alpha}(x)$ satisfies (A1)-(A3) for each $\alpha\in A$.
\item[(H2)] $\displaystyle\sum_{j\neq0}M_j(\alpha)\leq C m_\alpha$, where $M_j(\alpha)$ is as in (A3), $\displaystyle m_\alpha = \inf_{|\xi|\leq\pi}\hat{\phi}_{\alpha}(\xi)$, and C is independent of $\alpha$.
\item[(H3)] $\text{For almost every } |\xi|\leq\pi; \displaystyle\lim_{\alpha\to\infty}\dfrac{m_\alpha}{\hat{\phi}_\alpha(\xi)}=0.$
\end{itemize}

The indexing set $A$ may be continuous or discrete to serve our purpose, but in either case we require that $A\subset(0,\infty)$ is unbounded.  We may now take a similar path as the one laid out in \cite{paper 3}, as well as \cite{siva}.  We introduce the notation:
\[
 I_{\alpha}f(x)=\sum_{j\in\mathbb{Z}}a_j\phi_\alpha(x-x_j)\hspace{.2in }\text{where } I_\alpha f(x_n)=f(x_n) \text{ and }\{a_j\}\in l^2(\mathbb{Z}).
\]
In light of Proposition \ref{L2prop}, $I_\alpha f \in L^2(\mathbb{R})$; thus we may use the Fourier transform, which is given by:
\begin{equation}\label{transform}
\widehat{I_\alpha f}(\xi) = \hat{\phi}_\alpha (\xi)\sum_{j\in\mathbb{Z}}a_je^{-ix_j \xi}=\hat{\phi}_\alpha(\xi)\psi_\alpha(\xi).
\end{equation}

\begin{prop} The function $\psi_{\alpha}(\xi)$ satisfies the following relationship:
$$ \hat{\phi}_{\alpha}(\xi)\psi_{\alpha}(\xi) +\sum_{j\neq 0}A^{*j}\left(  \hat{\phi}_{\alpha}(\eta+2\pi j)A^j \psi_{\alpha}(\eta)\right)(\xi)=\hat{f}(\xi)$$
for $-\pi\leq\xi\leq\pi$ where A is the prolongation operator and A* is its adjoint.

\begin{proof}  Write 

\begin{align*}
\int_{-\pi}^{\pi}\hat{f}(\xi)e^{ix_n\xi}d\xi &=\sqrt{2\pi} f(x_n)=\sqrt{2\pi} I_{\alpha}f(x_n)\\
&=\int_{\mathbb{R}}\hat{\phi}_{\alpha}(\xi)\psi_{\alpha}(\xi)e^{ix_n\xi}d\xi\\
&=\sum_{j\in\mathbb{Z}}\int_{(2j-1)\pi}^{(2j+1)\pi}\hat{\phi}_{\alpha}(\xi)\psi_{\alpha}(\xi)e^{ix_n\xi}d\xi\\
&=\sum_{j\in\mathbb{Z}}\int_{-\pi}^{\pi}\hat{\phi}_{\alpha}(\xi+2\pi j)A^j\psi_{\alpha}(\xi)e^{ix_n(\xi+2\pi j)}  d\xi\\
&=\sum_{j\in\mathbb{Z}}\int_{-\pi}^{\pi} A^{*j}\left(\hat{\phi}_{\alpha}(\eta+2\pi j)A^j \psi_{\alpha}(\eta)\right)(\xi)e^{ix_n\xi}  d\xi\\
&=\int_{-\pi}^{\pi}\left\{\sum_{j\in\mathbb{Z}} A^{*j}\left(\hat{\phi}_{\alpha}(\eta+2\pi j)A^j \psi_{\alpha}(\eta)\right)(\xi)\right\}e^{ix_n\xi}  d\xi
\end{align*}

Since $\{x_n\}$ is a CIS and the above equations hold for all $n\in\mathbb{Z}$ we have:

\begin{equation}\label{eq1}
\hat{f}(\xi)=\sum_{j\in\mathbb{Z}}A^{*j}\left( \hat{\phi}_{\alpha}(\eta+2\pi j)A^j \psi_{\alpha}(\eta) \right)(\xi).
\end{equation}

\end{proof}
\end{prop}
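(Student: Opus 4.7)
The plan is to express $f(x_n)$ in two ways using Fourier inversion, exploit the interpolation identity $I_\alpha f(x_n) = f(x_n)$, and then appeal to the Riesz basis property of $\{e^{-ix_n\xi}\}$ on $[-\pi,\pi]$ to strip off the oscillating test functions. Since $f \in PW_\pi$, Fourier inversion gives $\sqrt{2\pi}\,f(x_n) = \int_{-\pi}^{\pi}\hat{f}(\xi)e^{ix_n\xi}\,d\xi$. On the other hand, Proposition \ref{L2prop} ensures $I_\alpha f \in L^2(\mathbb{R})$ and its Fourier transform $\hat\phi_\alpha \psi_\alpha$ lies in $L^1$ (this is exactly the $L^1$ estimate carried out in that proof), so $\sqrt{2\pi}\,I_\alpha f(x_n) = \int_{\mathbb{R}}\hat\phi_\alpha(\xi)\psi_\alpha(\xi) e^{ix_n\xi}\,d\xi$.

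Next I would decompose $\mathbb{R} = \bigsqcup_{j\in\mathbb{Z}}[(2j-1)\pi,(2j+1)\pi]$ and make the substitution $\xi \mapsto \xi + 2\pi j$ on the $j$-th piece. The key identity from the definition of the prolongation operator is that $\psi_\alpha(\xi + 2\pi j) = A^{j}\psi_\alpha(\xi)$ for $|\xi| \leq \pi$, since if $\psi_\alpha(\xi) = \sum_k a_k e^{-ix_k\xi}$, then $\psi_\alpha(\xi + 2\pi j) = \sum_k a_k e^{-2\pi i j x_k} e^{-ix_k\xi} = A^j\psi_\alpha(\xi)$. After the substitution, the $j$-th integral picks up the phase $e^{2\pi i j x_n}$ together with $e^{ix_n\xi}$.

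The main obstacle, and the step that motivates the formulation of the proposition, is absorbing this phase factor into the integrand via the adjoint. The trick is the computation
\[
e^{2\pi i j x_n} e^{ix_n\xi} = \overline{e^{-2\pi i j x_n} e^{-ix_n\xi}} = \overline{A^{j}e_n(\xi)},
\]
where $e_n(\xi) = e^{-ix_n\xi}$, because the $e_n$ are eigenfunctions of $A$ with eigenvalues $e^{-2\pi i x_n}$. Hence for any $h \in L^2([-\pi,\pi])$,
\[
\int_{-\pi}^{\pi} h(\xi)\, e^{2\pi i j x_n}e^{ix_n\xi}\,d\xi = \langle h, A^{j}e_n\rangle = \langle A^{*j}h, e_n\rangle = \int_{-\pi}^{\pi} A^{*j}h(\xi)\, e^{ix_n\xi}\,d\xi.
\]
Applying this with $h(\xi) = \hat\phi_\alpha(\xi + 2\pi j)\,A^j\psi_\alpha(\xi)$ rewrites each summand exactly as in the statement.

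Finally I would interchange the sum over $j$ with the $[-\pi,\pi]$ integral (justified by the $L^1$ bound used in Proposition \ref{L2prop}, together with the bounds \eqref{OPbound} on $A^{\ast j}$), and compare with the expression for $\sqrt{2\pi}\,f(x_n)$. Since $I_\alpha f(x_n) = f(x_n)$ for every $n \in \mathbb{Z}$, the two $L^2([-\pi,\pi])$ functions whose Fourier-type coefficients against $\{e^{ix_n\xi}\} = \{\overline{e_n}\}$ agree for all $n$ must coincide: the Riesz basis property forces the vanishing of a function all of whose inner products with $\{e_n\}$ are zero. This yields the claimed identity on $[-\pi,\pi]$.
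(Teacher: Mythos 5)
Your proposal is correct and follows essentially the same route as the paper: Fourier inversion of both $f$ and $I_\alpha f$ at the nodes $x_n$, decomposition of $\mathbb{R}$ into the shifted intervals $[(2j-1)\pi,(2j+1)\pi]$, the identity $\psi_\alpha(\xi+2\pi j)=A^j\psi_\alpha(\xi)$, passage to the adjoint $A^{*j}$ to absorb the phase $e^{2\pi i j x_n}$, and the completeness of the exponential system to conclude. Your explicit observation that $e_n$ is an eigenfunction of $A$ with eigenvalue $e^{-2\pi i x_n}$ is a nice way of making the adjoint step transparent, but it is the same argument.
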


We rewrite \eqref{eq1} as

\begin{equation}\label{eq2}
\hat{f}(\xi)=\widehat{I_{\alpha}f}(\xi)+ \sum_{j\neq 0}A^{*j}\left( \dfrac{\hat{\phi}_{\alpha}(\eta+2\pi j)}{m_{\alpha}} A^j \left(m_{\alpha}\psi_{\alpha}(\eta)\right) \right)(\xi)
\end{equation}

We can abbreviate \eqref{eq2} as

\begin{equation}\label{eq3}
\hat{f}(\xi)=\widehat{I_{\alpha}f}(\xi)+ B_{\alpha}\left( m_{\alpha}\psi_{\alpha}(\eta)\right)(\xi)
\end{equation}

where

\begin{equation}\label{B}
B_{\alpha}g(\xi)= \sum_{j\neq 0}A^{*j}\left( \dfrac{\hat{\phi}_{\alpha}(\eta+2\pi j)}{m_{\alpha}} A^j (g(\eta))\right)(\xi).
\end{equation}

In order to proceed, we will need the following

\begin{prop}

\begin{equation}\label{eq4}\left\|  B_{\alpha}g(\xi) \right\|_{L^2([-\pi,\pi])}\leq C\left\|g(\xi)\right\|_{L^2([-\pi,\pi])},\end{equation} where C is independent of both $\alpha$ and g.
\end{prop}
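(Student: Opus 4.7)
The plan is to bound $\|B_\alpha g\|_{L^2([-\pi,\pi])}$ by a term-by-term estimate on each summand in the definition of $B_\alpha$, and then to sum these estimates using hypothesis (H2), which is precisely designed to give a bound independent of $\alpha$.

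First I would apply the triangle inequality in $L^2([-\pi,\pi])$ to pull the sum over $j\neq 0$ outside the norm, reducing the problem to estimating
\[
\left\|A^{*j}\left(\frac{\hat{\phi}_\alpha(\eta+2\pi j)}{m_\alpha}\,A^j g(\eta)\right)\right\|_{L^2([-\pi,\pi])}
\]
uniformly in $j$. Next, I would use the operator bound \eqref{OPbound} for the adjoint $A^{*j}$ to eliminate it at the cost of a constant $C^2$. For the remaining inner expression, I would bound the multiplier pointwise on $[-\pi,\pi]$ by its supremum, which is exactly $M_j(\alpha)/m_\alpha$ by the definition of $M_j$ in (A3). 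Pulling this constant outside the $L^2$ norm leaves $\|A^j g\|_{L^2([-\pi,\pi])}$, which \eqref{OPbound} bounds by $C^2\|g\|_{L^2([-\pi,\pi])}$.

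Combining these estimates yields
\[
\|B_\alpha g\|_{L^2([-\pi,\pi])} \leq C^4 \|g\|_{L^2([-\pi,\pi])} \sum_{j\neq 0}\frac{M_j(\alpha)}{m_\alpha},
\]
and hypothesis (H2) then gives $\sum_{j\neq 0} M_j(\alpha) \leq Cm_\alpha$, so the $m_\alpha$ in the denominator cancels and the remaining constant is independent of $\alpha$ (and manifestly independent of $g$).

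The only subtle point, and what I would treat with slightly more care, is the justification for moving the norm past the infinite sum: I would either argue by the triangle inequality on finite partial sums and use the dominated convergence consequence of (H2) to pass to the limit, or observe directly that the absolute convergence of $\sum_{j\neq 0} M_j(\alpha)/m_\alpha$ legitimizes the interchange. Otherwise the proof is a routine chain of applications of (A3), (H2), and the operator bound \eqref{OPbound}, with the real content of the estimate packaged into hypothesis (H2).
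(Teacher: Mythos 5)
Your argument is correct and follows essentially the same chain as the paper's proof: triangle inequality over the sum, the uniform operator bound \eqref{OPbound} for $A^{*j}$ and $A^j$, the pointwise bound of the multiplier by $M_j(\alpha)/m_\alpha$, and finally (H2) to obtain a constant independent of $\alpha$. Your extra remark about justifying the interchange of the norm with the infinite sum is a reasonable point of care but does not change the substance of the argument.
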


\begin{proof}
First note that the operator norms of both $A^j$ and $A^{*j}$, for $j\in\mathbb{Z}$, are bounded uniformly.
Now,

\begin{align*}
&\left\| \sum_{j\neq 0}A^{*j}\left(  \dfrac{\hat{\phi}_{\alpha}(\eta+2\pi j)}{m_{\alpha}} A^j (g(\eta))\right)(\xi) \right\|_{L^2([-\pi,\pi])}\\
 &\leq\sum_{j\neq 0}\left\| A^{*j}\left( \dfrac{\hat{\phi}_{\alpha}(\eta+2\pi j)}{m_{\alpha}} A^j (g(\eta))\right)(\xi) \right\|_{L^2([-\pi,\pi])}\\
&\leq\sum_{j\neq 0}C\left\| \dfrac{\hat{\phi}_{\alpha}(\xi+2\pi j)}{m_{\alpha}} A^j g(\xi)  \right\|_{L^2([-\pi,\pi])}\\
&\leq \dfrac{C}{m_{\alpha}}\sum_{j\neq 0}\sup_{|\xi|\leq\pi}|\hat{\phi}_{\alpha}(\xi-2\pi j)|\left\| A^j g(\xi)\right\|_{L^2([-\pi,\pi])}\\
&\leq \dfrac{C}{m_{\alpha}} \sum_{j\neq 0}M_j(\alpha)  \left\| g(\xi) \right\|_{L^2([-\pi,\pi])}\\
&\leq C  \left\| g(\xi) \right\|_{L^2([-\pi,\pi])}\\
\end{align*}

We've used \eqref{OPbound}, (A3), and (H2) to obtain the constant C which is independent of both $\alpha$ and $g(\xi)$.  
\end{proof}

\begin{lem}
If $f \in PW_\pi$ and $\psi_{\alpha}(\xi)$ is as in \eqref{eq2}, then \begin{equation}\label{eq5}
\left\| \psi_{\alpha}(\xi)  \right\|_{L^2([-\pi,\pi])}\leq (1/m_\alpha)\left\| \hat{f}\right\|_{L^2([-\pi,\pi])}.
\end{equation}
\end{lem}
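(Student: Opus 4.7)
The plan is to pair the interpolation identity $I_\alpha f(x_k)=f(x_k)$ against the coefficients $\{\bar a_k\}$ and evaluate the resulting double sum in Fourier space, exactly as in the positive-definiteness calculation of Lemma \ref{interpolation proposition}. Concretely, I will compute the quantity $\sum_{k\in\mathbb{Z}}\bar a_k f(x_k)$ in two different ways. On the one hand, using $f(x_k)=I_\alpha f(x_k)=\sum_j a_j\phi_\alpha(x_k-x_j)$ and the Fourier representation $\phi_\alpha(x_k-x_j)=(2\pi)^{-1/2}\int_\mathbb{R}\hat\phi_\alpha(\xi)e^{i(x_k-x_j)\xi}d\xi$, one obtains (as in the proof of Lemma \ref{interpolation proposition})
\[
\sum_{k\in\mathbb{Z}}\bar a_k f(x_k)=\frac{1}{\sqrt{2\pi}}\int_{\mathbb{R}}\hat\phi_\alpha(\xi)|\psi_\alpha(\xi)|^2 d\xi.
\]
On the other hand, using $\hat f\in L^2([-\pi,\pi])$ and the inversion formula $f(x_k)=(2\pi)^{-1/2}\int_{-\pi}^\pi \hat f(\xi)e^{ix_k\xi}d\xi$, together with the definition $\psi_\alpha(\xi)=\sum_j a_j e^{-ix_j\xi}$, one obtains
\[
\sum_{k\in\mathbb{Z}}\bar a_k f(x_k)=\frac{1}{\sqrt{2\pi}}\int_{-\pi}^\pi \hat f(\xi)\overline{\psi_\alpha(\xi)}\,d\xi.
\]

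Equating the two expressions, discarding the nonnegative contribution from $|\xi|>\pi$, and invoking (A2) to bound $\hat\phi_\alpha(\xi)\ge m_\alpha$ on $[-\pi,\pi]$ yields
\[
m_\alpha\|\psi_\alpha\|_{L^2([-\pi,\pi])}^2 \le \int_{-\pi}^\pi \hat f(\xi)\overline{\psi_\alpha(\xi)}\,d\xi.
\]
A single application of the Cauchy--Schwarz inequality to the right-hand side, followed by division by $\|\psi_\alpha\|_{L^2([-\pi,\pi])}$ (the estimate being trivial if this norm vanishes), gives the desired inequality.

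The only issue requiring care is the justification of the Fubini swap used in passing from $\sum_{k}\bar a_k I_\alpha f(x_k)$ to $(2\pi)^{-1/2}\int_\mathbb{R}\hat\phi_\alpha|\psi_\alpha|^2d\xi$; this is identical to the interchange already carried out in Lemma \ref{interpolation proposition}, so it is immediate under (A1)--(A3). Likewise, the interchange in the second evaluation is justified because $\{a_j\}\in\ell^2$ and $\{e^{-ix_j\xi}\}$ is a Riesz basis for $L^2([-\pi,\pi])$, so $\psi_\alpha\in L^2([-\pi,\pi])$ and pairing against $\hat f\in L^2([-\pi,\pi])$ is legal. No appeal to the operator $B_\alpha$ or equations \eqref{eq2}--\eqref{eq3} is needed for this lemma; they enter in the subsequent recovery argument.
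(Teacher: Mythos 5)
Your proof is correct, and the substance is the same energy argument as the paper's: everything reduces to the chain $m_\alpha\|\psi_\alpha\|^2_{L^2([-\pi,\pi])}\leq\langle\hat\phi_\alpha\psi_\alpha,\psi_\alpha\rangle\leq\langle\hat f,\psi_\alpha\rangle\leq\|\hat f\|_{L^2([-\pi,\pi])}\|\psi_\alpha\|_{L^2([-\pi,\pi])}$ followed by division by $\|\psi_\alpha\|$. The difference is organizational. The paper pairs the already-established identity \eqref{eq2} (equivalently \eqref{eq1}) with $\psi_\alpha$ and uses positivity of each term $\langle\hat\phi_\alpha(\cdot+2\pi j)A^j\psi_\alpha,A^j\psi_\alpha\rangle$ to discard the $j\neq0$ contributions; you instead pair the interpolation conditions directly with $\{\bar a_k\}$ and evaluate $\sum_k\bar a_k f(x_k)$ two ways. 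These are the same computation in disguise, since $\langle\hat f,\psi_\alpha\rangle=\sqrt{2\pi}\sum_k\bar a_k f(x_k)$ and $\int_{\mathbb{R}}\hat\phi_\alpha|\psi_\alpha|^2\,d\xi$ is exactly the periodized sum in \eqref{eq1} tested against $\psi_\alpha$; your route simply bypasses the $A^j$, $A^{*j}$ formalism, which makes the lemma self-contained and arguably more transparent, at the cost of re-justifying the Fubini interchange (your appeal to Lemma \ref{interpolation proposition} for that, and implicitly to Lemma \ref{sq sum} for the absolute convergence of $\sum_k\bar a_k f(x_k)$, covers it). One small point to make explicit: the inequality $m_\alpha\|\psi_\alpha\|^2\leq\int_{-\pi}^{\pi}\hat f\,\overline{\psi_\alpha}\,d\xi$ tacitly uses that the right-hand integral is real and nonnegative; this is automatic because it equals the manifestly nonnegative quantity $\int_{\mathbb{R}}\hat\phi_\alpha|\psi_\alpha|^2\,d\xi$, but it deserves a sentence before Cauchy--Schwarz is applied.
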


\begin{proof}
We use \eqref{eq2} and take the inner product of both sides with $\psi_{\alpha}(\xi)$.  This yields:
\[
\left\langle \hat{f} ,\psi_{\alpha} \right\rangle = \left\langle{\hat{\phi}_{\alpha}\psi_{\alpha}} ,{\psi_{\alpha}} \right\rangle  +\sum_{j\neq 0} \left\langle {\hat{\phi}_{\alpha}(\xi+2\pi j)A^j\psi_{\alpha}} ,{A^j\psi_{\alpha}}   \right\rangle.
\]
Since all of the summands are positive we have:

\begin{align*}
m_{\alpha}\left\|\psi_{\alpha}\right\|^2_{L^2([-\pi,\pi])}&=m_{\alpha} \left\langle \psi_{\alpha}  , \psi_{\alpha}  \right\rangle \leq \left\langle \hat{\phi}_{\alpha}\psi_{\alpha} ,\psi_{\alpha}  \right\rangle\\
 &\leq \left\langle \hat{f} , \psi_{\alpha} \right\rangle \leq \left\|\hat{f}\right\|_{L^2([-\pi,\pi])}\left\|\psi_{\alpha}\right\|_{L^2([-\pi,\pi])},
 \end{align*}

\noindent which is the desired result.
\end{proof}
\noindent Combining these last two results yields the following.

\begin{lem}\label{LEMMA1}
If $f\in PW_\pi$ then:

\begin{equation}\label{eq6}
\left\| \widehat{I_{\alpha}f}(\xi)\right\|_{L^2([-\pi,\pi])}\leq C \left\| \hat{f}(\xi)\right\|_{L^2([-\pi,\pi])}
\end{equation}

\noindent where $C>0$ is a constant independent of both $f$ and $\alpha$. 
\end{lem}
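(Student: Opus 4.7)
The plan is to read off the bound directly from equation \eqref{eq3} by the triangle inequality, using the two results that immediately precede the statement. Specifically, rearranging \eqref{eq3} gives
\[
\widehat{I_\alpha f}(\xi)=\hat{f}(\xi)-B_\alpha\bigl(m_\alpha\psi_\alpha(\eta)\bigr)(\xi),
\]
so taking $L^2([-\pi,\pi])$ norms yields
\[
\bigl\|\widehat{I_\alpha f}\bigr\|_{L^2([-\pi,\pi])}\leq \bigl\|\hat{f}\bigr\|_{L^2([-\pi,\pi])}+\bigl\|B_\alpha(m_\alpha\psi_\alpha)\bigr\|_{L^2([-\pi,\pi])}.
\]

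Next I would apply the previous proposition (the bound \eqref{eq4}) to the second term with $g=m_\alpha\psi_\alpha$, pulling out the scalar $m_\alpha$:
\[
\bigl\|B_\alpha(m_\alpha\psi_\alpha)\bigr\|_{L^2([-\pi,\pi])}\leq C\,m_\alpha\bigl\|\psi_\alpha\bigr\|_{L^2([-\pi,\pi])}.
\]
Then I would invoke the preceding lemma, namely \eqref{eq5}, which controls $\|\psi_\alpha\|_{L^2([-\pi,\pi])}$ by $(1/m_\alpha)\|\hat{f}\|_{L^2([-\pi,\pi])}$. The factors of $m_\alpha$ cancel exactly, leaving
\[
\bigl\|B_\alpha(m_\alpha\psi_\alpha)\bigr\|_{L^2([-\pi,\pi])}\leq C\bigl\|\hat{f}\bigr\|_{L^2([-\pi,\pi])},
\]
with $C$ independent of $\alpha$ and $f$. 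Combining this with the first inequality gives a bound of the form $(1+C)\|\hat{f}\|_{L^2([-\pi,\pi])}$, which is the desired estimate after absorbing the $1$ into the constant.

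There is no real obstacle here; the lemma is essentially a corollary of the two preceding results. The only thing worth checking carefully is the cancellation of the $m_\alpha$ factors, which is precisely what makes the constant uniform in $\alpha$ — this is the design feature of the regularity hypotheses (H1)–(H3), with (H2) entering through \eqref{eq4} and the matching bound on $\|\psi_\alpha\|$ supplied by the positivity argument in \eqref{eq5}. The independence of $C$ from $f$ is automatic once one notes that both \eqref{eq4} and \eqref{eq5} hold with constants independent of $f$.
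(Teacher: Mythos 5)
Your proposal is correct and follows exactly the paper's argument: the triangle inequality applied to \eqref{eq3}, then \eqref{eq4} with $g=m_\alpha\psi_\alpha$ combined with \eqref{eq5} so that the $m_\alpha$ factors cancel, yielding the constant $(1+C)$. Nothing to add.
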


\begin{proof}
In view of \eqref{eq3}, we have
\[
\left\| \widehat{I_{\alpha}f}(\xi)\right\|_{L^2([-\pi,\pi])}\leq \left\|\hat{f}(\xi)\right\|_{L^2([-\pi,\pi])} + \left\|  B_{\alpha}\big( m_{\alpha}\psi_{\alpha}\big)(\xi) \right\|_{L^2([-\pi,\pi])},
\]
and \eqref{eq5} together \eqref{eq4} yield:
\[
\left\| \widehat{I_{\alpha}f}(\xi)\right\|_{L^2([-\pi,\pi])}\leq (1+C)\left\| \hat{f}\right\|_{L^2([-\pi,\pi])} \leq C\left\| \hat{f}\right\|_{L^2([-\pi,\pi])} .
\]
\end{proof}

\begin{prop}
The mapping $I_{\alpha}: PW_\pi \to L^2(\mathbb{R})$, given by $f(x) \mapsto I_{\alpha}f(x)$, is bounded uniformly with respect to $\alpha$.  Which is to say that if $f\in PW_\pi$ then:

\begin{equation}\label{eq7}
\left\| I_{\alpha}f(x)  \right\|_{L^2(\mathbb{R})} \leq C \left\| f(x) \right\|_{L^2(\mathbb{R})},
\end{equation}
where $C>0$ is independent of both $\alpha$ and $f$.
\end{prop}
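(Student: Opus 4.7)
The plan is to apply Plancherel's theorem to convert the $L^2(\mathbb{R})$ norm of $I_\alpha f$ into an integral of $|\widehat{I_\alpha f}|^2$, then split the real line into the central piece $[-\pi,\pi]$ and its translates $[(2k-1)\pi,(2k+1)\pi]$ for $k\neq 0$, handling the two pieces by different means.

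For the $k=0$ piece, the integrand equals $|\hat{\phi}_\alpha(\xi)\psi_\alpha(\xi)|^2$ on $[-\pi,\pi]$, which is exactly $|\widehat{I_\alpha f}(\xi)|^2$ there. So Lemma \ref{LEMMA1} immediately gives
\[
\int_{-\pi}^{\pi}|\widehat{I_\alpha f}(\xi)|^2\,d\xi \;\leq\; C\|\hat{f}\|_{L^2([-\pi,\pi])}^2 \;=\; C\|f\|_{L^2(\mathbb{R})}^2.
\]

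For the $k\neq 0$ pieces, I would perform the same change of variables used in the proof of the proposition preceding \eqref{eq1}, rewriting the integral on $[(2k-1)\pi,(2k+1)\pi]$ as an integral on $[-\pi,\pi]$ with $\hat{\phi}_\alpha(\xi+2\pi k) A^k\psi_\alpha(\xi)$ in place of $\hat{\phi}_\alpha(\xi)\psi_\alpha(\xi)$. Pulling out $\sup_{|\xi|\le\pi}|\hat{\phi}_\alpha(\xi+2\pi k)|=M_k(\alpha)$ and applying \eqref{OPbound} to the resulting $\|A^k\psi_\alpha\|^2_{L^2([-\pi,\pi])}$ yields
\[
\sum_{k\neq 0}\int_{-\pi}^{\pi}|\hat{\phi}_\alpha(\xi+2\pi k)|^2|A^k\psi_\alpha(\xi)|^2\,d\xi \;\leq\; C\Bigl(\sum_{k\neq 0}M_k(\alpha)^2\Bigr)\|\psi_\alpha\|_{L^2([-\pi,\pi])}^2.
\]

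The crucial step is then to control $\sum_{k\neq 0}M_k(\alpha)^2$ uniformly in $\alpha$ relative to $m_\alpha^2$, so that we can absorb the $1/m_\alpha$ blowup in $\|\psi_\alpha\|_{L^2}$ coming from \eqref{eq5}. Since each $M_k(\alpha)\geq 0$, we have the pointwise inequality $\sum_{k\neq 0}M_k(\alpha)^2 \leq \bigl(\sum_{k\neq 0}M_k(\alpha)\bigr)^2$, and hypothesis (H2) bounds the right side by $C^2 m_\alpha^2$. Combining this with $\|\psi_\alpha\|_{L^2([-\pi,\pi])}\leq (1/m_\alpha)\|\hat{f}\|_{L^2([-\pi,\pi])}$, the $m_\alpha$ factors cancel and we obtain a bound of the form $C\|\hat{f}\|_{L^2([-\pi,\pi])}^2 = C\|f\|_{L^2(\mathbb{R})}^2$ on this tail contribution as well.

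The only part that required any insight beyond routine bookkeeping is the $\ell^2$-versus-$\ell^1$ comparison for the $M_k(\alpha)$ which allows (H2) (an $\ell^1$-type hypothesis) to be used in a calculation that naturally produces $\ell^2$-type sums; everything else is a direct recycling of the estimates already established in the proof of Proposition \ref{L2prop} combined with the two preceding lemmas.
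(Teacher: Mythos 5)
Your proposal is correct and follows essentially the same route as the paper: Plancherel, the decomposition of $\|\widehat{I_\alpha f}\|_{L^2(\mathbb{R})}^2$ into the $[-\pi,\pi]$ piece (handled by Lemma \ref{LEMMA1}) plus the tail involving $\hat{\phi}_\alpha(\xi+2\pi j)A^j\psi_\alpha$, with the tail controlled via \eqref{OPbound}, the bound $\sum_{j\neq 0}M_j(\alpha)^2\leq\bigl(\sum_{j\neq 0}M_j(\alpha)\bigr)^2$, (H2), and \eqref{eq5} so that the $m_\alpha$ factors cancel. The $\ell^2$-versus-$\ell^1$ comparison you single out is exactly the step the paper also highlights.
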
 

\begin{proof}
Plancherel's theorem implies that \eqref{eq7} is equivalent to

\begin{equation}\label{eq8}
\left\| \widehat{I_{\alpha}f}(\xi)   \right\|_{L^2(\mathbb{R})} \leq C \left\| \hat{f}(\xi) \right\|_{L^2([-\pi,\pi])}.
\end{equation}
Now we write:

\begin{align*}
&\left\| \widehat{I_{\alpha}f}(\xi)   \right\|^2_{L^2(\mathbb{R})}=\left\| \widehat{I_{\alpha}f}(\xi)   \right\|^2_{L^2([-\pi,\pi])} + \sum_{j\neq 0}\int_{-\pi}^{\pi}\left|\hat{\phi}_{\alpha}(\xi+2\pi j) A^j \psi_{\alpha}(\xi)  \right|^2d\xi \\
&\leq C^2 \left\| \hat{f}(\xi)\right\|^2_{L^2([-\pi,\pi])} + C^4\sum_{j\neq 0}\left(\sup_{|\xi|\leq\pi}|\hat\phi_{\alpha}(\xi+2\pi j)|\right)^2\left\|\psi_{\alpha}\right\|^2_{L^2([-\pi,\pi])}\\
&\leq C^2 \left\| \hat{f}(\xi)\right\|^2_{L^2([-\pi,\pi])} + \dfrac{C^4}{m_\alpha^2}\left\{\sum_{j\neq 0}M_j(\alpha)\right\}^2\left\| \hat{f}(\xi)\right\|^2_{L^2([-\pi,\pi])}
\end{align*}
Here we have used Lemma \ref{LEMMA1}, \eqref{OPbound}, (H2), and the fact that all the terms in the sum are positive so that $\| \{M_j(\alpha)\} \|_{l^2}^2\leq \| \{M_j(\alpha)\} \|_{l^1}^2$ holds.  Thus, we have shown that for a positive constant C, which is independent of both $\alpha$ and $f$,
\[
\left\| \widehat{I_{\alpha}f}(\xi)   \right\|_{L^2(\mathbb{R})} \leq C \left\|\hat{f}(\xi)\right\|_{L^2([-\pi,\pi])}.
\]
\end{proof}
Now we are in position to prove one of the main results.

\begin{thm}
If $f\in PW_\pi$, then
\begin{equation}\label{eq9}
\lim_{\alpha\to\infty}\left\| f(x) - I_{\alpha} f(x)\right\|_{L^2(\mathbb{R})}=0.
\end{equation}
\end{thm}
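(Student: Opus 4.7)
By Plancherel's theorem, $\|I_\alpha f - f\|_{L^2(\mathbb{R})}=\|\widehat{I_\alpha f}-\hat f\|_{L^2(\mathbb R)}$, and since $\hat f$ vanishes off $[-\pi,\pi]$, identity \eqref{eq3} yields
\[
\|I_\alpha f-f\|^2_{L^2(\mathbb R)}=\|B_\alpha(m_\alpha\psi_\alpha)\|^2_{L^2([-\pi,\pi])}+\sum_{j\ne 0}\|\hat\phi_\alpha(\cdot+2\pi j)A^j\psi_\alpha\|^2_{L^2([-\pi,\pi])}.
\]
The first step is to absorb the tail into the body.  Pairing \eqref{eq1} with $\psi_\alpha$, exactly as in the proof of \eqref{eq5}, gives $\sum_{j\ne 0}\int\hat\phi_\alpha(\xi+2\pi j)|A^j\psi_\alpha(\xi)|^2\,d\xi=\langle B_\alpha(m_\alpha\psi_\alpha),\psi_\alpha\rangle$.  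Applying the pointwise inequality $\hat\phi_\alpha(\xi+2\pi j)^2\le M_j(\alpha)\hat\phi_\alpha(\xi+2\pi j)$, together with the bound $\max_j M_j(\alpha)\le Cm_\alpha$ from (H2) and the bound $\|\psi_\alpha\|\le\|\hat f\|/m_\alpha$ from \eqref{eq5}, the tail is dominated by $C\|B_\alpha(m_\alpha\psi_\alpha)\|_{L^2([-\pi,\pi])}\|\hat f\|_{L^2([-\pi,\pi])}$.  Consequently the whole theorem reduces to showing
\[
\|B_\alpha(m_\alpha\psi_\alpha)\|_{L^2([-\pi,\pi])}\longrightarrow 0\quad\text{as }\alpha\to\infty.
\]

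For this vanishing I would invoke the uniform operator bound \eqref{eq7} together with a standard density and $\varepsilon/3$ argument: it suffices to prove convergence on a dense subclass $D\subset PW_\pi$, for instance $D=\{f\in PW_\pi:\hat f\in C_c((-\pi,\pi))\}$.  For $f\in D$ the key pointwise identity on $[-\pi,\pi]$ is $m_\alpha\psi_\alpha(\xi)=(m_\alpha/\hat\phi_\alpha(\xi))\widehat{I_\alpha f}(\xi)$.  By (H3) the multiplier $m_\alpha/\hat\phi_\alpha$ is bounded by $1$ and tends to $0$ a.e., while $\widehat{I_\alpha f}$ is uniformly bounded in $L^2([-\pi,\pi])$ by \eqref{eq6}; in combination with the extra regularity forced on $\widehat{I_\alpha f}$ by the smoothness and compact support of $\hat f$ and by (A3), a dominated-convergence argument should yield $m_\alpha\psi_\alpha\to 0$ in $L^2([-\pi,\pi])$.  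The uniform boundedness of $B_\alpha$ recorded in \eqref{eq4} then gives $\|B_\alpha(m_\alpha\psi_\alpha)\|\to 0$ for every $f\in D$, and the $\varepsilon/3$ argument based on \eqref{eq7} extends the convergence to all of $PW_\pi$.

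The technically delicate step is the $L^2$-decay of $m_\alpha\psi_\alpha$ on $D$.  Hypothesis (H3) only produces pointwise decay of the multiplier $m_\alpha/\hat\phi_\alpha$, so upgrading this to $L^2$ decay of the product $(m_\alpha/\hat\phi_\alpha)\widehat{I_\alpha f}$ requires a fixed dominating function, or equivalently uniform integrability, for the $\alpha$-dependent family $\{|\widehat{I_\alpha f}|^2\}$.  Securing this via the smoothness and compact support of $\hat f\in D$ together with (A3) and the ``regular'' structure encoded in (H2)--(H3) is where the real work lies; once it is in hand, the rest of the proof is the bookkeeping outlined above.
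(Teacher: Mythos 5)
Your reduction of the theorem to the single claim $\left\| B_{\alpha}\left(m_{\alpha}\psi_{\alpha}\right)\right\|_{L^2([-\pi,\pi])}\to 0$ is sound (the pairing trick that bounds the tail $\sum_{j\neq 0}\int_{-\pi}^{\pi}\hat{\phi}_{\alpha}(\xi+2\pi j)^2|A^j\psi_{\alpha}|^2d\xi$ by $Cm_{\alpha}\left\|B_{\alpha}(m_{\alpha}\psi_{\alpha})\right\|\left\|\psi_{\alpha}\right\|$ is a legitimate alternative to the paper's direct estimate of that term), but the proof of that claim is exactly where your argument stops: you propose dominated convergence applied to $T_{\alpha}\widehat{I_{\alpha}f}=(m_{\alpha}/\hat{\phi}_{\alpha})\widehat{I_{\alpha}f}$, observe that this needs a fixed dominating function for the $\alpha$-dependent family $\{|\widehat{I_{\alpha}f}|^2\}$, and leave that unestablished. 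This is a genuine gap, not a routine detail. Restricting to $\hat{f}\in C_c((-\pi,\pi))$ does not obviously help, because $\widehat{I_{\alpha}f}=\hat{\phi}_{\alpha}\psi_{\alpha}$ inherits its $\xi$-dependence from the solution $\{a_j\}$ of an $\alpha$-dependent interpolation system, and no uniform-in-$\alpha$ pointwise control on $\psi_{\alpha}$ is available from \eqref{eq5} or \eqref{eq6} (those are only $L^2$ bounds). The density/$\varepsilon/3$ scaffolding is fine in principle but cannot rescue an unproven core.

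The paper closes this gap by never applying a limit theorem to the $\alpha$-dependent function. It rewrites \eqref{eq2} as the operator identity $(I+B_{\alpha}T_{\alpha})\widehat{I_{\alpha}f}=\hat{f}$ on $L^2([-\pi,\pi])$, invokes uniform boundedness of the inverses $\left\|(I+B_{\alpha}T_{\alpha})^{-1}\right\|_{op}\leq C$, and deduces $\hat{f}-\widehat{I_{\alpha}f}=(I+B_{\alpha}T_{\alpha})^{-1}B_{\alpha}T_{\alpha}\hat{f}$, hence $\left\|\hat{f}-\widehat{I_{\alpha}f}\right\|_{L^2([-\pi,\pi])}\leq C\left\|T_{\alpha}\hat{f}\right\|_{L^2([-\pi,\pi])}$ with $T_{\alpha}$ acting on the \emph{fixed} function $\hat{f}$. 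The tail is then handled by $\left\|T_{\alpha}\widehat{I_{\alpha}f}\right\|\leq\left\|T_{\alpha}\hat{f}\right\|+\left\|\hat{f}-\widehat{I_{\alpha}f}\right\|$, using only $m_{\alpha}/\hat{\phi}_{\alpha}\leq 1$ on $[-\pi,\pi]$. Everything reduces to $\left\|T_{\alpha}\hat{f}\right\|_{L^2([-\pi,\pi])}\to 0$, which follows from (H3) and dominated convergence with the fixed dominator $|\hat{f}|$ --- no density argument and no uniform integrability of an $\alpha$-dependent family is needed. If you want to complete your route, the missing ingredient is precisely this transfer from $T_{\alpha}\widehat{I_{\alpha}f}$ to $T_{\alpha}\hat{f}$ via the uniformly invertible operator $(I+B_{\alpha}T_{\alpha})$.
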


\begin{proof}
We use Plancherel's theorem and show that 

\begin{equation}\label{eq10}
\lim_{\alpha\to\infty}\left\| \hat{f}(\xi) - \widehat{I_{\alpha} f}(\xi)\right\|_{L^2(\mathbb{R})}=0.
\end{equation}
Since $f\in PW_\pi$ we may write

\begin{equation}\label{eq11}
\left\| \hat{f}(\xi) - \widehat{I_{\alpha} f}(\xi)\right\|^2_{L^2(\mathbb{R})}= \left\| \hat{f}(\xi) - \widehat{I_{\alpha} f}(\xi)\right\|^2_{L^2([-\pi,\pi])} + \sum_{j\neq 0}\left\| A^j\widehat{I_{\alpha}f}(\xi) \right\|^2_{L^2([-\pi,\pi])}.
\end{equation}
We estimate the two terms on right hand side of the previous equation separately.  For the first term in \eqref{eq11} we rewrite \eqref{eq2} as:

\[
\hat{f}(\xi)=\widehat{I_{\alpha}f}(\xi) + \sum_{j\neq 0}A^{*j}\left( \dfrac{\hat{\phi}_{\alpha}(\eta+2\pi j)}{m_{\alpha} }\left( A^j \left[ \dfrac{m_{\alpha}}{\hat{\phi}_{\alpha}(\omega)} \widehat{I_{\alpha}f}(\omega) \right] \right)(\eta)   \right)(\xi)
\]
for $-\pi\leq\xi\leq\pi.$  We abbreviate this expression to 
\begin{equation}\label{eq12}
\left(I+B_{\alpha}T_{\alpha}\right)\widehat{I_{\alpha}f}(\xi)=\hat{f}(\xi)
\end{equation}
for $|\xi|\leq\pi$.  Notice that in view of \eqref{eq6} and the fact that $f\in PW_\pi$ we have that both $\hat{f}$ and $\widehat{I_{\alpha}f}$ are in $L^2([-\pi,\pi])$ and $I, B_{\alpha},$ and $T_{\alpha}$ are linear operators on $L^2([-\pi,\pi])$, where $I$ is the usual identity operator, $B_{\alpha}$ is defined by  \eqref{B}, and 

\begin{equation}\label{M}
T_{\alpha}g(\xi)=\dfrac{m_{\alpha}}{\hat{\phi}_{\alpha}(\xi)}g(\xi).
\end{equation}
The content of the last proposition is that the operator $(I+B_{\alpha} T_{\alpha})$ is invertible as a mapping from $L^2([-\pi,\pi])$ to itself and that the inverses are uniformly bounded, i.e.

\begin{equation}\label{opnorm}
\left\| (I+B_{\alpha}T_{\alpha})^{-1}\right\|_{op} \leq C.
\end{equation}
So we may write
\[
\hat{f}-\widehat{I_{\alpha}f}=\hat{f}-(I+B_{\alpha}T_{\alpha})^{-1}\hat{f}=(I+B_{\alpha}T_{\alpha})^{-1}(B_{\alpha}T_{\alpha})\hat{f}.
\]
So, in view of \eqref{opnorm} and \eqref{eq4} we have that:
\begin{equation}\label{bound1}
\left\| \hat{f}-\widehat{I_{\alpha}f} \right\|_{L^2([-\pi,\pi])} \leq C\left\|T_{\alpha}\hat{f} \right\|_{L^2([-\pi,\pi])}.
\end{equation}
We move now to the second term in \eqref{eq11}.
\[
\sum_{j\neq 0}\left\| A^j\widehat{I_{\alpha}f}(\xi) \right\|^2_{L^2([-\pi,\pi])}=\sum_{j\neq 0}\int_{-\pi}^{\pi}\left|\dfrac{\hat{\phi}_{\alpha}(\xi+2\pi j)}{m_{\alpha}} A^j T_{\alpha}\widehat{I_{\alpha}f}(\xi) \right|^2d\xi
\]
Estimates completely analogous to those used in \eqref{eq8} show that
\begin{align}\label{bound2}
&\sum_{j\neq 0} \left\| A^j  \widehat{I_{\alpha}f}(\xi) \right\|^2_{L^2([-\pi,\pi])} \leq C \sum_{j\neq 0}\dfrac{M_j(\alpha)}{m_\alpha}\left\| T_\alpha \widehat{I_\alpha f}(\xi)  \right\|_{L^2([-\pi,\pi])}^{2}\\
\nonumber& \leq C\left\| T_{\alpha}\widehat{I_{\alpha}f} \right\|_{L^2([-\pi,\pi])}^2 ,
\end{align}
where $C>0$ is independent of $f$ and $\alpha$.  Now we have
\begin{eqnarray*}
\left\| T_{\alpha}\widehat{I_{\alpha}f}  \right\|_{L^2([-\pi,\pi])}   \leq & \left\| T_{\alpha}\hat{f}  \right\|_{L^2([-\pi,\pi])} + \left\| T_{\alpha}(\hat{f}-\widehat{I_{\alpha}f})  \right\|_{L^2([-\pi,\pi])}  \\
\leq &\left\| T_{\alpha}\hat{f}  \right\|_{L^2([-\pi,\pi])}  + \left\| \hat{f}-\widehat{I_{\alpha}f}  \right\|_{L^2([-\pi,\pi])}
\end{eqnarray*}
the last inequality follows from the fact that when $|\xi|\leq\pi$ we have $$ \dfrac{m_{\alpha}}{\hat{\phi}_{\alpha}(\xi)} \leq \dfrac{m_{\alpha}}{m_{\alpha}} \leq1 .$$
Finally, from \eqref{bound1} we see that \eqref{bound2} may be written as:
\begin{equation}\label{bound3}
\sum_{j\neq 0}\left\| A^j\widehat{I_{\alpha}f}(\xi) \right\|^2_{L^2([-\pi,\pi])} \leq C \left\| T_{\alpha}\hat{f} \right\|^2_{L^2([-\pi,\pi])}.
\end{equation}
where $C$ is independent of both $f$ and $\alpha$.

\noindent Combining \eqref{eq11} with the estimates \eqref{bound1} and \eqref{bound3} yields a positive constant $C_1$, independent of $f$ and $\alpha$, such that

\begin{equation}\label{bound}
\left\| \hat{f}(\xi) - \widehat{I_{\alpha} f}(\xi)\right\|^2_{L^2(\mathbb{R})} \leq C_1\left\| T_{\alpha}\hat{f}  \right\|^2_{L^2([-\pi,\pi])} 
\end{equation}
Using (H3), we see that an application of the dominated convergence theorem yields the conclusion of the theorem.
\end{proof}
\noindent This result yields another on the pointwise convergence of $I_{\alpha}f(x)$.  We have the following
\begin{thm}
If $f\in PW_\pi$, then for all $x\in\mathbb{R}$

\begin{equation}\label{thm2}
\big| f(x) - I_{\alpha}f(x)   \big| \leq C \left\| T_{\alpha}\hat{f}(\xi)   \right\|_{L^2([-\pi,\pi])},
\end{equation}
where $C$ is a constant independent of both $f$ and $\alpha$.  Thus we have that
\[ \lim_{\alpha\to\infty} I_{\alpha}f(x)=f(x)
\]
uniformly on $\mathbb{R}$.
\end{thm}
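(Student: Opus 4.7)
The plan is to exploit the Fourier inversion formula: since Proposition \ref{L2prop} guarantees that $\widehat{I_\alpha f}\in L^1(\mathbb{R})$, we may write
\[
f(x)-I_\alpha f(x) = \frac{1}{\sqrt{2\pi}}\int_{\mathbb{R}}\bigl(\hat f(\xi)-\widehat{I_\alpha f}(\xi)\bigr)e^{ix\xi}\,d\xi.
\]
Because $\hat f$ is supported in $[-\pi,\pi]$, the right-hand side splits naturally into a central piece on $[-\pi,\pi]$ and a tail $\int_{|\xi|>\pi}\widehat{I_\alpha f}(\xi)e^{ix\xi}\,d\xi$. For the central piece, I would apply the Cauchy--Schwarz inequality to bound it by $\sqrt{2\pi}\,\|\hat f-\widehat{I_\alpha f}\|_{L^2([-\pi,\pi])}$, and then invoke the estimate \eqref{bound1} from the preceding proof to dominate this by $C\|T_\alpha\hat f\|_{L^2([-\pi,\pi])}$.

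The tail is the part that requires more care, and I expect it to be the main obstacle. Breaking $\{|\xi|>\pi\}$ into the union of intervals $[(2j-1)\pi,(2j+1)\pi]$ and translating back to $[-\pi,\pi]$ recasts the tail as $\sum_{j\neq 0}\int_{-\pi}^{\pi}|\hat\phi_\alpha(\xi+2\pi j)|\,|A^j\psi_\alpha(\xi)|\,d\xi$. A Cauchy--Schwarz bound on each integrand, followed by (A3) and the uniform operator bound on $A^j$, produces the factor $\sum_{j\neq 0}M_j(\alpha)\,\|\psi_\alpha\|_{L^2([-\pi,\pi])}$. The key trick is that on $[-\pi,\pi]$ one has $m_\alpha\psi_\alpha(\xi)=T_\alpha\widehat{I_\alpha f}(\xi)$, so $\|\psi_\alpha\|_{L^2([-\pi,\pi])}=m_\alpha^{-1}\|T_\alpha\widehat{I_\alpha f}\|_{L^2([-\pi,\pi])}$. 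The triangle inequality, together with the fact that $T_\alpha$ has operator norm at most $1$ on $L^2([-\pi,\pi])$, then yields
\[
\|T_\alpha\widehat{I_\alpha f}\|_{L^2([-\pi,\pi])} \leq \|T_\alpha\hat f\|_{L^2([-\pi,\pi])} + \|\hat f-\widehat{I_\alpha f}\|_{L^2([-\pi,\pi])} \leq C\|T_\alpha\hat f\|_{L^2([-\pi,\pi])},
\]
again by \eqref{bound1}. The factor $m_\alpha^{-1}$ then cancels exactly against $\sum_{j\neq 0}M_j(\alpha)\leq Cm_\alpha$ furnished by (H2), yielding a tail bound of $C\|T_\alpha\hat f\|_{L^2([-\pi,\pi])}$.

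Adding the central and tail estimates gives \eqref{thm2} with a constant independent of $f$, $\alpha$, and $x$. For the uniform convergence statement, I would observe that the right-hand side of \eqref{thm2} does not depend on $x$, so it suffices to show $\|T_\alpha\hat f\|_{L^2([-\pi,\pi])}\to 0$ as $\alpha\to\infty$. Since $T_\alpha\hat f(\xi)=(m_\alpha/\hat\phi_\alpha(\xi))\hat f(\xi)$ is pointwise dominated by $|\hat f(\xi)|\in L^2([-\pi,\pi])$ and tends to $0$ almost everywhere on $[-\pi,\pi]$ by hypothesis (H3), the dominated convergence theorem delivers the conclusion.
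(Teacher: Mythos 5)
Your argument is correct, and its skeleton matches the paper's: Fourier inversion, a split into the central piece on $[-\pi,\pi]$ and the tail over $|\xi|>\pi$, Cauchy--Schwarz plus \eqref{bound1} for the central piece, and (H3) with dominated convergence for the limit. Where you genuinely diverge is the tail. The paper bounds the tail by $\sum_{j\neq 0}\big\| A^j\widehat{I_\alpha f}\big\|_{L^2([-\pi,\pi])}$ and then cites \eqref{bound3} --- which, as stated, controls only the sum of the \emph{squares} of these norms, so that citation requires an extra word (one really needs to rerun the \eqref{bound3} computation at the $\ell^1$ level, using $\sum_j M_j(\alpha)/m_\alpha \leq C$ from (H2) rather than the squared version). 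You instead go directly through $\psi_\alpha$: translating each band back to $[-\pi,\pi]$, pulling out $M_j(\alpha)$, using the uniform bound \eqref{OPbound} on $A^j$, and then the identity $m_\alpha\psi_\alpha = T_\alpha\widehat{I_\alpha f}$ on $[-\pi,\pi]$ so that the factor $m_\alpha^{-1}$ cancels exactly against $\sum_{j\neq 0}M_j(\alpha)\leq Cm_\alpha$ from (H2); the triangle inequality with $\|T_\alpha\|_{op}\leq 1$ and \eqref{bound1} then closes the loop. This is a cleaner route that produces the needed $\ell^1$-type bound directly and sidesteps the $\ell^1$-versus-$\ell^2$ wrinkle in the paper's citation, at the cost of redoing a computation the paper prefers to reuse from its $L^2(\mathbb{R})$ estimate. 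Both arguments use the same ingredients and yield the same constant structure, so the difference is one of bookkeeping rather than substance.
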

\begin{proof}
We will use the Cauchy-Schwarz inequality.
\begin{align*}
\left| f(x)-I_{\alpha}f(x)  \right|&= \dfrac{1}{\sqrt{2\pi}}\left|\int_{-\pi}^{\pi} (\hat{f}(\xi)- \widehat{I_{\alpha}f}(\xi))e^{ix\xi}d\xi  +  \sum_{j\neq 0}     \int_{-\pi}^{\pi}e^{ix\xi}A^j \widehat{I_{\alpha}f} (\xi)d\xi \right|\\
&\leq \dfrac{1}{\sqrt{2\pi}}   \left\{\int_{-\pi}^{\pi}\left| \hat{f}(\xi)- \widehat{I_{\alpha}f}(\xi)  \right|d\xi  +\sum_{j\neq 0}\int_{-\pi}^{\pi}\left| A^j\widehat{I_{\alpha}f}(\xi) \right|d\xi  \right\} \\
&\leq   \left\| \hat{f}(\xi)- \widehat{I_{\alpha}f}(\xi) \right\|_{L^2([-\pi,\pi])}    + \sum_{j\neq0 }\left\| A^j \widehat{I_{\alpha}f} (\xi)   \right\|_{L^2([-\pi,\pi])}  \\
& \leq (C_1+C_2) \left\| T_{\alpha}\hat{f}(\xi) \right\|_{L^2([-\pi,\pi])}=C \left\| T_{\alpha}\hat{f}(\xi) \right\|_{L^2([-\pi,\pi])}
\end{align*}
where the last inequality comes from \eqref{bound1} and \eqref{bound3}.
This is the desired result since $C$ is independent of both $\alpha$ and $f$.  As $\alpha$ increases without bound, we get the desired pointwise limit.  Notice that this convergence does not depend on $x$, hence the convergence is uniform in $\mathbb{R}$.
\end{proof}

\section{Examples}
\noindent This section deals with two examples which have not appeared in the literature.  The first example is the family of Poisson kernels $\mathcal{F}=\{\sqrt{2/\pi}\alpha(\alpha^2+x^2)^{-1}\}_{\alpha\geq 1}$, and the second example deals with the forward difference of the multiquadric $\sqrt{1+x^2}$.

\begin{prop}
$\mathcal{F}=\{\sqrt{2/\pi}\alpha(\alpha^2+x^2)^{-1}\}_{\alpha\geq 1}$ is a regular family of interpolators.
\end{prop}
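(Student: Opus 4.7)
The plan is to work everything out from an explicit computation of the Fourier transform of the Poisson kernel, after which verifying the three hypotheses (H1)--(H3) reduces to elementary estimates on exponential functions.

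First I would compute $\hat\phi_\alpha$. Using the normalization \eqref{FTdef} together with the standard identity $\int_{\mathbb R} (\alpha^2+x^2)^{-1}e^{-ix\xi}\,dx=(\pi/\alpha)e^{-\alpha|\xi|}$, one obtains the clean formula
\[
\hat\phi_\alpha(\xi) = e^{-\alpha|\xi|}.
\]
From this I would immediately verify (A1) (both $\phi_\alpha$ and $\hat\phi_\alpha$ are clearly in $L^1(\mathbb R)$), and (A2) (nonnegativity of $\hat\phi_\alpha$ is obvious, and on $[-\pi,\pi]$ one has $\hat\phi_\alpha(\xi)\geq e^{-\alpha\pi}=:\delta>0$). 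For (A3), I would note that the function $|\xi+2\pi j|$ on $[-\pi,\pi]$ attains its minimum at an endpoint, giving
\[
M_j(\alpha)=\sup_{|\xi|\leq\pi}e^{-\alpha|\xi+2\pi j|}=e^{-\alpha(2|j|-1)\pi}\qquad(j\neq 0),\qquad M_0(\alpha)=1,
\]
and a single geometric series calculation shows $\{M_j(\alpha)\}_{j\in\mathbb Z}\in \ell^1(\mathbb Z)$. This settles (H1).

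Next I would compute $m_\alpha=\inf_{|\xi|\leq\pi}e^{-\alpha|\xi|}=e^{-\alpha\pi}$ and check (H2) by summing a geometric series:
\[
\sum_{j\neq 0}M_j(\alpha)=2\sum_{j=1}^{\infty}e^{-\alpha(2j-1)\pi}=\frac{2e^{-\alpha\pi}}{1-e^{-2\alpha\pi}}\leq \frac{2}{1-e^{-2\pi}}\,m_\alpha,
\]
where the final inequality uses $\alpha\geq 1$ so that $1-e^{-2\alpha\pi}\geq 1-e^{-2\pi}$; the constant $2/(1-e^{-2\pi})$ is independent of $\alpha$, which is exactly what (H2) requires. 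Finally, (H3) follows at once from
\[
\frac{m_\alpha}{\hat\phi_\alpha(\xi)}=e^{\alpha(|\xi|-\pi)},
\]
since $|\xi|<\pi$ almost everywhere on $[-\pi,\pi]$ and $\alpha(|\xi|-\pi)\to -\infty$ as $\alpha\to\infty$ on that set.

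The only place that requires any care is (H2), where one must verify that the constant in the bound $\sum_{j\neq 0}M_j(\alpha)\leq Cm_\alpha$ is genuinely independent of $\alpha$; the restriction $\alpha\geq 1$ in the definition of the family (which keeps $1-e^{-2\alpha\pi}$ bounded away from zero) is precisely what makes this work. Every other step is a direct exponential computation, so once $\hat\phi_\alpha(\xi)=e^{-\alpha|\xi|}$ is in hand the argument is essentially mechanical.
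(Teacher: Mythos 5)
Your proposal is correct and follows exactly the same route as the paper: the paper's proof consists of recording the same formulas $\hat\phi_\alpha(\xi)=e^{-\alpha|\xi|}$, $m_\alpha=e^{-\alpha\pi}$, and $M_j(\alpha)=e^{-\alpha\pi(2|j|-1)}$ for $j\neq 0$, $M_0(\alpha)=1$, and then declaring the verification of the hypotheses a routine exercise. You have simply written out those routine details (the geometric series for (H2), including the correct observation that $\alpha\geq 1$ is what keeps the constant uniform), which is a welcome addition rather than a departure.
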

\begin{proof}
Letting $\phi_\alpha(x)=\sqrt{\dfrac{2}{\pi}}\dfrac{\alpha}{\alpha^2+x^2}$, we find that $\hat{\phi}_\alpha(\xi)=e^{-\alpha|\xi|}$, $m_\alpha=e^{-\alpha\pi}$, and $M_j(\alpha)=e^{-\alpha\pi(2|j|-1)}$ for $j\neq 0$ and $M_0(\alpha)=1$.  Now checking the required hypotheses is a routine exercise.
\end{proof}

The next example is more involved and requires the introduction of the following notation.  We let $\Delta^1g(x)=g(x+1)+g(x-1)-2g(x)$ and for $k=2,3,\dots$ we set $\Delta^kg(x)=\Delta^1(\Delta^{k-1}g)(x)$.  We write $f_{*}^{k}(x)$ for the $k$-fold convolution of $f(x)$, that is 
\[
f_{*}^{k}(x)=\underbrace{f*\cdots*f}_{k \text{ times}}(x).
\]
\begin{prop}\label{p1}
$\mathcal{G}=\{(-1)^k\Delta^k\phi_{*}^{k}(x)\}_{k= 1}^{\infty}$, where $\phi(x)=\sqrt{1+x^2}$, is a regular family of interpolators.
\end{prop}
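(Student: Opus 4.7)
The plan is to compute $\hat\phi_k$ for $\phi_k(x) := (-1)^k\Delta^k\phi^{*k}(x)$ in closed form and then to verify (A1)--(A3) together with (H2) and (H3) directly. The key algebraic identity is that $\Delta$ is convolution with the tempered distribution $\delta_{-1}+\delta_1-2\delta_0$, and so distributes across convolution:
\[
\Delta^k\phi^{*k}(x) = (\Delta\phi)^{*k}(x).
\]
This is the only sensible reading of the definition, since $\phi^{*k}$ is not classically defined (the multiquadric grows linearly), whereas the central second difference $\Delta\phi$ decays like $|x|^{-3}$ at infinity and hence lies in $L^1(\mathbb{R})$, so convolving $k$ such functions remains in $L^1$.

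For the Fourier side, starting from $\phi''(x) = (1+x^2)^{-3/2}$ and the classical identity $\widehat{(1+x^2)^{-3/2}}(\xi) = \sqrt{2/\pi}\,|\xi|K_1(|\xi|)$ one obtains $\hat\phi(\xi) = -\sqrt{2/\pi}\,K_1(|\xi|)/|\xi|$ away from the origin (up to distributional pieces at $\xi=0$). The Fourier multiplier of $\Delta$ is $2\cos\xi-2=-4\sin^2(\xi/2)$, whose $k$-th power vanishes to order $2k$ at the origin and annihilates both the $|\xi|^{-2}$ singularity of $\hat\phi^k$ and any $\delta$-type pieces of lower order. After combining these with the $(-1)^k$ prefactor and tracking signs one arrives at
\[
\hat\phi_k(\xi) = c_k\,\sin^{2k}(\xi/2)\left(\frac{K_1(|\xi|)}{|\xi|}\right)^k,\qquad c_k>0,
\]
which is a non-negative $L^1$ function on $\mathbb{R}$.

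Set $g(\xi) := \sin^2(\xi/2)K_1(|\xi|)/|\xi|$, so that $\hat\phi_k = c_k g^k$. Then (A1)--(A3) are immediate: $\phi_k\in L^1$ since $\Delta\phi\in L^1$; $\hat\phi_k\in L^1$ because $K_1(z)\sim\sqrt{\pi/(2z)}\,e^{-z}$ at infinity; $\hat\phi_k\geq 0$ by the formula; and summability of $M_j(k) = c_k\sup_{|\xi|\leq\pi}g(\xi+2\pi j)^k$ in $j$ follows from the exponential decay of $K_1$. To identify $m_k$, factor
\[
g(\xi) = \left[\frac{\sin(\xi/2)}{\xi/2}\right]^2\cdot\frac{|\xi|K_1(|\xi|)}{4}.
\]
The identity $(zK_1(z))' = -zK_0(z)<0$ together with monotonicity of $(\sin u)/u$ on $(0,\pi/2]$ shows both factors are strictly decreasing on $(0,\pi]$; hence $g$ is even and strictly decreasing on $[0,\pi]$, and $m_k = c_k g(\pi)^k = c_k[K_1(\pi)/\pi]^k$.

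The same monotonicity of $K_1(z)/z$ shows that for $|j|\geq 1$ the supremum $\sup_{|\xi|\leq\pi}g(\xi+2\pi j)$ is attained at the endpoint of the translated interval closest to the origin, where $\sin^2 = 1$, giving $\sup = K_1((2|j|-1)\pi)/((2|j|-1)\pi)$. Its ratio to $g(\pi)$ equals $1$ for $|j|=1$ and is strictly less than $1$ for $|j|\geq 2$, decaying like $e^{-2(|j|-1)\pi}$, so
\[
\sum_{j\neq 0}\frac{M_j(k)}{m_k} = 2 + 2\sum_{j\geq 2}\left(\frac{K_1((2j-1)\pi)}{(2j-1)K_1(\pi)}\right)^k \leq C
\]
uniformly in $k$, which is (H2). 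Condition (H3) is immediate: $m_k/\hat\phi_k(\xi) = (g(\pi)/g(\xi))^k \to 0$ pointwise for every $|\xi|<\pi$. The main obstacle is establishing the closed-form expression for $\hat\phi_k$ rigorously, in particular justifying the tempered distributional Fourier computations for the non-integrable $\phi$ and tracking signs so that the $(-1)^k$ factor in the definition produces a non-negative $\hat\phi_k$; once this formula and the monotonicity of $g$ are in hand, the remaining verifications reduce to standard exponential-decay estimates on the Macdonald function.
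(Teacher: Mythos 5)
Your proposal follows the same overall route as the paper: establish the closed-form Fourier transform $c_k[\sin^2(\xi/2)]^k[K_1(|\xi|)/|\xi|]^k$ (the paper's equivalent form is $[2(1-\cos\xi)/\xi^2]^k[|\xi|K_1(|\xi|)]^k$), then verify (A1)--(A3), (H2), (H3) from the exponential decay and monotonicity of the Macdonald function. Two of your steps are genuine improvements on the paper's write-up. First, the identity $\Delta^k\phi^{*k}=(\Delta\phi)^{*k}$ together with the $O(|x|^{-3})$ decay of the central second difference gives $\phi_k\in L^1(\mathbb{R})$ directly; the paper instead proves integrability of $\hat u_k,\hat u_k',\hat u_k''$ (Lemma~\ref{l1}) in order to force $O(|x|^{-2})$ decay of $u_k$. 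Second, the factorization $g(\xi)=[\sin(\xi/2)/(\xi/2)]^2\cdot|\xi|K_1(|\xi|)/4$ combined with $(zK_1(z))'=-zK_0(z)$ turns the paper's bare assertion that the derivative is negative into an actual monotonicity proof, which is what identifies $m_k=c_k\,g(\pi)^k$ and makes (H3) immediate; your (H2) bound via the ratios $K_1((2|j|-1)\pi)/((2|j|-1)K_1(\pi))<1$ raised to the $k$-th power is equivalent to the paper's explicit estimate.

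The one genuine gap is the sign, and it is exactly the point you defer. Your own inputs give $\hat\phi(\xi)=-\sqrt{2/\pi}\,K_1(|\xi|)/|\xi|$ (negative) and the symbol $2\cos\xi-2=-4\sin^2(\xi/2)$ (negative), so the transform of $(-1)^k\Delta^k\phi^{*k}$ carries the sign $(-1)^k\cdot(-1)^k\cdot(-1)^k=(-1)^k$; that is, $c_k<0$ for odd $k$. Concretely, for $k=1$ convexity of $\sqrt{1+x^2}$ makes $\Delta\phi\geq0$ an integrable positive definite function, so $-\Delta\phi$ has nonpositive transform and (A2) fails for the family as literally written: the nonnegative transform belongs to $\Delta^k\phi_*^k$ \emph{without} the $(-1)^k$ prefactor (or with $\Delta$ defined as $2g(x)-g(x+1)-g(x-1)$). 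No amount of ``tracking signs'' will produce $c_k>0$ from the stated definition. The paper has the same issue buried in the constant $C_k$ that it declines to compute before discarding, so your formula faithfully reproduces the paper's; but since you correctly identify the sign bookkeeping as the main obstacle, you should carry it out and record that it forces a (harmless) correction to the definition of the family rather than confirming the claimed positivity.
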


We begin with the Fourier transform.
\[
\widehat{(-1)^k\Delta^k\phi_{*}^{k}(x)}=C_k \left[\dfrac{2(1-\cos(\xi))}{\xi^2}\right]^k[|\xi|K_1(|\xi|)]^k,
\]
where $K_1$ is the Macdonald function (see \cite{wendland}) and the value of $C_k$ may be found in \cite{wendland} as well, but it does not affect the calculations that follow.  Thus, we will subsequently omit $C_k$ in what follows.  To simplify notation a bit, we let
\begin{equation}
\hat{u}_k(\xi)= \left[\dfrac{2(1-\cos(\xi))}{\xi^2}\right]^k[|\xi|K_1(|\xi|)]^k.
\end{equation}  
The proof of the proposition rests on the following lemmas.

\begin{lem}\label{l1}
$\hat{u}^{(j)}_k(\xi)\in L^1(\mathbb{R})$ for $j=0,1,2$.
\end{lem}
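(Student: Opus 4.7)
The plan is to analyze $\hat{u}_k$ by writing it as a product of two factors, $F(\xi)^k$ with $F(\xi)=2(1-\cos\xi)/\xi^2$, and $g(\xi)^k$ with $g(\xi)=|\xi|K_1(|\xi|)$, and then to combine the behavior of each factor at the origin and at infinity via the Leibniz rule. Since $\hat{u}_k$ is even, I will work on $[0,\infty)$ throughout.

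First I would record the standard behavior of the two factors. The factor $F$ extends to a $C^\infty$ even function on $\mathbb{R}$ with $F(0)=1$, all derivatives bounded on $\mathbb{R}$, and $F(\xi)=O(\xi^{-2})$ together with $F^{(j)}(\xi)=O(\xi^{-2})$ as $|\xi|\to\infty$ for $j=0,1,2$; hence $F^k$ and its first two derivatives are $O(\xi^{-2k})$ at infinity. For $g$, I would use the classical small-argument expansion
\[
rK_1(r)=1+\tfrac{r^2}{2}\bigl[\log(r/2)+\gamma-\tfrac12\bigr]+O(r^4\log r),
\]
from which $g$ is continuous at $0$ with $g(0)=1$, $g'(0)=0$, and $g''(\xi)=\log|\xi|+O(1)$ near $0$. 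For $r\to\infty$ the asymptotic $K_1(r)\sim\sqrt{\pi/(2r)}\,e^{-r}$ gives $g(\xi)\sim c|\xi|^{1/2}e^{-|\xi|}$, and differentiating the asymptotic expansion termwise shows $g'(\xi)$ and $g''(\xi)$ are also $O(|\xi|^{1/2}e^{-|\xi|})$ for large $|\xi|$.

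Next I would combine these via the binomial chain rule. For $g^k$, write $(g^k)'=kg^{k-1}g'$ and $(g^k)''=k(k-1)g^{k-2}(g')^2+kg^{k-1}g''$; near $0$ the only singular contribution is $kg^{k-1}g''$, which is $O(\log|\xi|)$ and hence locally integrable, while near infinity $(g^k)^{(j)}$ inherits the exponential factor $e^{-k|\xi|}$ with polynomial prefactor. Then Leibniz gives
\[
\hat{u}_k^{(j)}(\xi)=\sum_{i=0}^{j}\binom{j}{i}(F^k)^{(i)}(\xi)\,(g^k)^{(j-i)}(\xi),\qquad j=0,1,2,
\]
so each summand is locally $O(|\log|\xi||)$ near $0$ (since the $F^k$ factors stay bounded there) and is $O(|\xi|^{k/2-2k}e^{-k|\xi|})$ at infinity. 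Both estimates are $L^1$-bounds on $(0,1)$ and on $(1,\infty)$ respectively, and symmetry handles $\xi<0$.

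The one subtlety worth treating carefully will be the logarithmic singularity in $g''$ at the origin: I need to verify that the distributional/classical second derivative of $|\xi|K_1(|\xi|)$ coincides almost everywhere with the expression coming from the expansion, so that writing $\hat{u}_k''$ as an a.e.\ defined function and estimating $\int|\hat{u}_k''|$ is legitimate. This follows because $g$ is $C^1(\mathbb{R})$ with $g'(0)=0$ and $g''$ exists classically on $\mathbb{R}\setminus\{0\}$ and lies in $L^1_{\mathrm{loc}}$, so $g\in W^{2,1}_{\mathrm{loc}}$. Once this is observed, the integrability estimates above immediately yield $\hat{u}_k^{(j)}\in L^1(\mathbb{R})$ for $j=0,1,2$.
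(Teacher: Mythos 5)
Your argument is correct and follows essentially the same route as the paper: both factor $\hat{u}_k$ through $f(\xi)=2(1-\cos\xi)/\xi^2$ and $g(\xi)=|\xi|K_1(|\xi|)$, derive the local behavior $O(1)$, $O(|\xi|\log|\xi|)$, $O(\log|\xi|)$ of the first two derivatives near the origin from the Taylor expansion of $f$ and the small-argument expansion of $K_1$, and finish with the exponential decay of $K_1$ at infinity. The only differences are cosmetic (you apply Leibniz to $F^k\cdot g^k$ where the paper differentiates $(fg)^k$ directly) plus your added remark that $g\in W^{2,1}_{\mathrm{loc}}$, which makes the treatment of the logarithmic singularity slightly more careful than the paper's.
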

\begin{proof}
Let $f(\xi)=\dfrac{2(1-\cos(\xi))}{\xi^2}$ and $g(\xi)= |\xi|K_1(|\xi|)$, then we have:
\begin{align*}
\hat{u}_k(\xi) & = [f(\xi)g(\xi)]^k = O(1),|\xi|\to 0,\\
\hat{u}'_{k}(\xi) & = k[f(\xi)g(\xi)]^{k-1}[f(\xi)g'(\xi)+f'(\xi)g(\xi)] = O(|\xi|\ln(|\xi|)),|\xi|\to 0,\\
\hat{u}''_{k}(\xi) & = k(k-1)[f(\xi)g(\xi)]^{k-2}\left[ \dfrac{d}{d\xi}\left( f(\xi)g(\xi) \right)\right] ^2\\
&+k[f(\xi)g(\xi)]^{k-1}\dfrac{d^2}{d\xi^2}\left( f(\xi)g(\xi) \right)=O(\ln(|\xi|)),|\xi|\to 0.
\end{align*}
To get the asymptotic estimates, we expanded $f(\xi)$ in a Taylor series centered at $\xi=0$, and used the estimates found in \cite{AS} for $g(\xi)$.  Noting that each of $\hat{u}_k(\xi),\hat{u}'_k(\xi),$ and $\hat{u}''_k(\xi)$ have exponential decay as $|\xi|\to\infty$, see \cite{AS}, we see that each function is integrable.
\end{proof}
\begin{proof}(of Proposition \ref{p1})

As a consequence of Lemma \ref{l1}, we have that both $u_k(x)\in L^1(\mathbb{R})$ and $\hat{u}_{k}(\xi)\in L^1(\mathbb{R})$, hence $(A1)$ is satisfied for each $k=1,2,3,\dots$  As for $(A2)$, it is clear that $\hat{u}_{k}(\xi)\geq 0$, and examining the derivative we see that it is negative on $(0,2\pi)$, hence $\hat{u}_{k}(\xi)\geq \hat{u}_{k}(\pi) > 0$.  For the bounding sequence in $(A3)$, we again apply estimates found in \cite{wendland}, for $r>0$:
\[
\sqrt{\pi/2}r^{-1/2}e^{-r}\leq K_{\beta}(r) \leq \sqrt{2\pi}r^{-1/2}e^{-r}e^{-\beta^2/(2r)},
\]
and use 
\[
M_j(k)=\left\{
        \begin{array}{ll}
            2^{3k/2}\pi^{-k}(2|j|-1)^{-3k/2}e^{-(2|j|-1)k\pi}; & \quad |j|>1 \\
            \hat{u}_k(\pi); & \quad |j| = 1 \\
            1; & \quad j=0.
        \end{array}
    \right.
\]
Thus, $M_j(k)\in l^1(\mathbb{Z})$ for all $k=1,2,3,\dots$, hence $(H1)$ is satisfied.  We also use this estimate above to get $m_k \geq 2^{3k/2}\pi^{-k}e^{-k\pi}$.  This allows us to check $(H2)$ and $(H3)$.  We have
\[
\sum_{j\neq 0}M_j(k)/m_k \leq 2+ (2/3)^k\sum_{|j|>1}e^{-2\pi(|j|-1)}\leq 2+\dfrac{4/3}{e^{2\pi}-1},
\]
so $(H2)$ is satisfied.  For $(H3)$, we note that since $\hat{u}_k$ decreases, $m_k < \hat{u}_k(\xi)$ if $|\xi|<\pi$. Now the  $k$th power in the transforms forces the limit to $0$.  Thus $\mathcal{G}$ is a regular family of interpolators.
\end{proof}

\end{document}